\def\beq{\begin{equation}}
\def\eeq{\end{equation}}
\def\ba{\begin{array}}
\def\ea{\end{array}}
\def\R{\mathbb R}
\numberwithin{equation}{section}
\newenvironment{key words}{\textbf{Keywords}\mbox{  }}{ }
\newtheorem{theorem}{Theorem}[section]
\newtheorem{lemma}[theorem]{Lemma}
\renewenvironment{proof}{\noindent{\textbf{Proof.}}}{\hfill$\Box$}
\theoremstyle{remark}
\newtheorem{remark}[theorem]{\textbf{Remark}}
\theoremstyle{plain}
    \newcommand{\rmnum}[1]{\romannumeral #1}
    \newcommand{\Rmnum}[1]{\expandafter\@slowromancap\romannumeral #1@}
\begin{document}
\title[Blowup analysis for subcritical integral equations]{\textbf{Blowup analysis for integral equations on bounded domains}}

\author  {Qianqiao Guo}

\address{Qianqiao Guo, Department of Applied Mathematics, Northwestern Polytechnical University, Xi'an, Shaanxi, 710129, China}

\email{gqianqiao@nwpu.edu.cn}



\date{}
\maketitle

\begin{abstract}
Consider the integral equation 
\begin{equation*}
f^{q-1}(x)=\int_\Omega\frac{f(y)}{|x-y|^{n-\alpha}}dy,\ \ f(x)>0,\quad x\in \overline \Omega,
\end{equation*}
where $\Omega\subset \mathbb{R}^n$ is a smooth bounded domain. For $1<\alpha<n$, the existence of energy maximizing positive solution in subcritical case $2<q<\frac{2n}{n+\alpha}$, and nonexistence of energy maximizing positive solution in critical case $q=\frac{2n}{n+\alpha}$ are proved in \cite{DZ2017}. For $\alpha>n$,  the existence of energy minimizing positive solution in subcritical case  $0<q<\frac{2n}{n+\alpha}$, and nonexistence of energy minimizing positive solution in critical case $q=\frac{2n}{n+\alpha}$ are also proved in \cite{DGZ2017}.
Based on these, in this paper, the blowup behaviour of energy maximizing positive solution as $q\to (\frac{2n}{n+\alpha})^+ $ (in the case of $1<\alpha<n$), and the blowup behaviour of energy minimizing positive solution as $q\to (\frac{2n}{n+\alpha})^-$ (in the case of $\alpha>n$) are analyzed. We see that for $1<\alpha<n$ the blowup behaviour obtained  is quite similar to that of the elliptic equation involving subcritical Sobolev exponent. But for $\alpha>n$, different phenomena appears. 
\end{abstract}

\begin{key words}
Blowup analysis, Integral equation, Hardy-Littlewood-Sobolev inequality, Reversed Hardy-Littlewood-Sobolev inequality
\end{key words}

{\small\bf 2010 Mathematics Subject Classification:} 45G05, 35B09, 35B44


\section{Introduction}
In this paper we consider the integral equation
\begin{equation*}
 (\mathcal{P}_q)\ \ \ \ \ \ \ \ \ \ \ \ \ \ \  \ \ \ \ \ \ \ \ \ \ \ \ \  f^{q-1}(x)=\int_\Omega\frac{f(y)}{|x-y|^{n-\alpha}}dy,\ \ f(x)>0,\quad x\in \overline \Omega,\ \ \ \ \ \ \ \ \ \ \ \ \ \ \ \ \ \
\end{equation*}
where $\Omega\subset \mathbb{R}^n$ is a smooth bounded domain. 

The equation $(\mathcal{P}_q)$ is quite similar to the classical scalar curvature equation, but with a global defined boundary condition. The study of this integral equation was called to our attention  by  Li in \cite{Li2004}, where he was studying the global defined integral equations.  This nonlocal equation is also much closer to the integral curvature equation introduced by Zhu in \cite{Zhu16}. In \cite{DZ2017} and \cite{DGZ2017}, the existence of extremal energy solution as well as the  nonexistence (on star-shaped domains) of positive solutions to $(\mathcal{P}_q)$  are studied  for $1 <\alpha<n$ and $\alpha>n$  respectively. In particular as $q$ going to the critical exponent $q_\alpha:=\frac{2n}{n+\alpha}$, the sequence of extremal energy solutions usually do not converge in $L^\infty$ sense.  In this paper, we would like to analyze how does the sequence of extremal energy solutions blow up in both cases {$1<\alpha<n$ and $\alpha>n$.

For simplicity, we denote $p_\alpha:=\frac{2n}{n-\alpha}$, the conjugate exponent of $q_\alpha$.

\subsection{For $1<\alpha<n$}

In this case the integral equation $(\mathcal{P}_q)$ is the Euler-Lagrange equation for the maximizer (if the supremum is attained) of
\begin{eqnarray*}
\xi_{\alpha,q}(\Omega):=\sup_{f\in L^q(\Omega),f\neq 0}\frac{\int_{\Omega} \int_{\Omega} f(x)|x-y|^{-(n-\alpha)} f(y) dx dy}{||f||^2_{L^q(\Omega)}}.
\end{eqnarray*}
Due to the classical sharp Hardy-Littlewood-Sobolev (HLS for short) inequality \cite{HL1928, HL1930, So1963, Lieb1983}, one can show (see, for example, Dou and Zhu \cite{DZ2017}) that
 $\xi_{\alpha,q_\alpha} (\Omega) $ is not attained by any functions in any smooth domain $\Omega \ne \mathbb{R}^n$, but  $\xi_{\alpha,q} (\Omega)$ is always attained by a maximizer $f\in C^{1}(\overline{\Omega}), f>0,$ in the subcritical case $q_\alpha<q<2$ in any smooth bounded domain $\Omega$.
  
This indicates that,  for smooth bounded domain $\Omega$, the integral equation $(\mathcal{P}_q)$ admits an energy maximizing positive solution (i.e., a positive solution which is also a maximizer to $\xi_{\alpha,q}(\Omega)$) $f_q\in C^{1}(\overline{\Omega})$ in the subcritical case $q_\alpha<q<2$, but does not admit  any energy maximizing positive solution in the critical case $q=q_\alpha$.
  
 Based on these, we claim that the energy maximizing positive solution $f_q$  must blow up as $q\to (q_\alpha)^+$, that is, $\max\limits_{x\in\overline{\Omega}}f_q\to \infty$  as $q\to (q_\alpha)^+$(see (\rmnum 1) of the following Theorem \ref{BL-alpha-small}). In this paper we will study the blowup behaviour of $f_q$ as $q\to (q_\alpha)^+$.

To compare with the semilinear elliptic equation, we denote $u_q(x):=f_q^{q-1}(x)$, which is a positive solution to 
\begin{equation}\label{HB-sub-1}
u(x)=\int_\Omega\frac{u^{p-1}(y)}{|x-y|^{n-\alpha}}dy,\ \ u(x)>0,\quad x\in \overline\Omega,
\end{equation}
where $\frac{1}{q}+\frac{1}{p}=1$. For convenience we also call $u_q$ an energy maximizing positive solution to \eqref{HB-sub-1}.
Notice that $q_\alpha<q<2$ is equivalent to $2<p<p_\alpha$ and $q\to (q_\alpha)^+$ is equivalent to $p\to (p_\alpha)^-$ . The first result of this paper is as following.
\begin{theorem}\label{BL-alpha-small}
Let $\alpha \in (1, n)$ and $\Omega$ a smooth bounded domain.
For $q_\alpha<q<2$, if $u_q$ is an energy maximizing positive solution  to \eqref{HB-sub-1}, then as $q\to (q_\alpha)^+$, up to a subsequence, 

(\rmnum 1). $\max\limits_{x\in\overline\Omega}u_q(x):=u_q(x_q)\to\infty$, and $x_q$ will stay away from $\partial\Omega$.

(\rmnum 2). $u_q(x)\le C(\frac{\mu_q}{\mu_q^2+|x-x_q|^2})^\frac{n-\alpha}{2}$, where $\mu_q= u_q^{-\frac {p-2}{\alpha}}(x_q)$.

(\rmnum 3). $u_q(x_q)u_q(x) \to \frac{\sigma_{n,\alpha}}{|x-x_0|^{n-\alpha}}$ if $x\neq x_0,\ x\in\overline{\Omega}$, where $x_0$ is the unique point  such that $x_q\to x_0$,  and
$\sigma_{n,\alpha}=(\pi^{\frac{n}{2}}\frac{\Gamma(\frac{\alpha}{2})}{\Gamma(\frac{n+\alpha}{2})})^{\frac{\alpha-n}{\alpha}}$.
\end{theorem}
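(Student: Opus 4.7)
The plan is to follow the classical blowup / bubble-extraction scheme familiar from the subcritical Sobolev problem, adapted to the integral equation setting by exploiting the scaling covariance of \eqref{HB-sub-1} at $p=p_\alpha$ and the Chen-Li-Ou classification of positive solutions on $\mathbb{R}^n$.

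For \emph{part (i)}, first suppose for contradiction that $\|u_q\|_\infty$ stays bounded along some sequence $q\to (q_\alpha)^+$. Then $\{u_q^{p-1}\}$ is also bounded, and standard regularity for the Riesz-type convolution gives uniform H\"older bounds on $u_q$, so that up to a subsequence $u_q\to u_\ast$ uniformly on $\overline\Omega$. Passing to the limit in the equation and in the variational characterization, $f_\ast:=u_\ast^{1/(p_\alpha-1)}$ would attain $\xi_{\alpha,q_\alpha}(\Omega)$, contradicting the nonexistence result from \cite{DZ2017}. To rule out $x_q\to\partial\Omega$, assume $x_q\to x_0\in\partial\Omega$ and perform the rescaling defined in (ii) below; the rescaled domains exhaust a half-space $H$, and the limit $v$ would be a positive bounded solution on $H$ of the critical integral equation, which is ruled out by a moving-plane / Kelvin-transform argument for the Riesz potential on half-spaces.

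For \emph{part (ii)}, introduce the natural rescaling
\[
\mu_q:=u_q(x_q)^{-(p-2)/\alpha},\qquad v_q(y):=\frac{u_q(x_q+\mu_q y)}{u_q(x_q)},\qquad \Omega_q:=\frac{\Omega-x_q}{\mu_q}.
\]
The choice of $\mu_q$ reflects the scale covariance $u\mapsto\lambda^{\alpha/(p-2)}u(\lambda\,\cdot)$ of \eqref{HB-sub-1}, and the rescaled function satisfies
\[
v_q(y)=\int_{\Omega_q}\frac{v_q^{p-1}(w)}{|y-w|^{n-\alpha}}\,dw,\qquad v_q(0)=1=\|v_q\|_\infty.
\]
By (i), $\Omega_q\uparrow\mathbb{R}^n$, and equicontinuity gives $v_q\to v$ in $C^0_{\mathrm{loc}}(\mathbb{R}^n)$, where $v$ solves the critical equation with $v(0)=\|v\|_\infty=1$. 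The Chen-Li-Ou classification then forces $v(y)=(1+a|y|^2)^{-(n-\alpha)/2}$ for an explicit $a>0$. To upgrade this local convergence to the \emph{global} pointwise bound claimed in (ii), I would bootstrap on dyadic annuli: the $L^\infty$ bound $v_q\le 1$ plugged into the rescaled equation, combined with the explicit decay $|w|^{-(n-\alpha)}$ of the limiting bubble, yields uniform decay for $v_q$ successively further out. This passage from local uniform convergence to a uniform bubble majorant is the main technical obstacle.

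For \emph{part (iii)}, write
\[
u_q(x_q)\,u_q(x)=u_q(x_q)\int_\Omega\frac{u_q^{p-1}(y)}{|x-y|^{n-\alpha}}\,dy
\]
and split $\Omega$ into $B(x_q,R\mu_q)$ and its complement. The bound from (ii) makes the outer contribution $o_R(1)$ as $R\to\infty$. For the inner part, change variables $y=x_q+\mu_q w$, use $u_q(y)=u_q(x_q)v_q(w)$ and $|x-y|\to|x-x_0|$, and observe the key exponent identity
\[
u_q(x_q)^p\mu_q^n=u_q(x_q)^{\,p-n(p-2)/\alpha}\longrightarrow 1,
\]
since $p-n(p-2)/\alpha$ vanishes precisely at $p=p_\alpha$. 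Dominated convergence (with the bubble majorant) then gives
\[
u_q(x_q)u_q(x)\longrightarrow\frac{1}{|x-x_0|^{n-\alpha}}\int_{\mathbb{R}^n}v^{p_\alpha-1}(w)\,dw.
\]
A direct computation with $v(y)=(1+a|y|^2)^{-(n-\alpha)/2}$ gives $\int v^{p_\alpha-1}=a^{-n/2}C_{n,\alpha}$ with $C_{n,\alpha}=\pi^{n/2}\Gamma(\alpha/2)/\Gamma((n+\alpha)/2)$; plugging $v$ into its own equation and invoking the Beta-function identity $\int_{\mathbb{R}^n}(1+|w|^2)^{-(n+\alpha)/2}|w|^{-(n-\alpha)}\,dw=C_{n,\alpha}$ fixes $a=C_{n,\alpha}^{2/\alpha}$, reducing the mass integral to $\sigma_{n,\alpha}=C_{n,\alpha}^{(\alpha-n)/\alpha}$, exactly the stated constant.
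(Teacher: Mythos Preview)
Your overall architecture matches the paper's, but two steps contain genuine gaps.

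\textbf{Part (i), boundary.} Your half-space Liouville argument does not close. If $x_q\to x_0\in\partial\Omega$, the rescaled domains $\Omega_q$ exhaust a half-space only when $\operatorname{dist}(x_q,\partial\Omega)/\mu_q$ stays bounded; if this ratio tends to infinity (which nothing in your argument excludes), $\Omega_q\to\mathbb{R}^n$ and you recover the same full-space bubble as in the interior case, with no contradiction to $x_q\to\partial\Omega$. The paper avoids this dichotomy entirely: it runs the method of moving planes (directly for convex $\Omega$, via Kelvin transform in general) on the integral equation to show that $u_q$ is \emph{uniformly bounded} in a fixed neighbourhood of $\partial\Omega$, following Han's strategy. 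Since $u_q(x_q)\to\infty$, this forces $x_q$ away from the boundary.

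\textbf{Part (iii), the key exponent limit.} You assert
\[
u_q(x_q)^p\mu_q^n=u_q(x_q)^{\,p-n(p-2)/\alpha}\longrightarrow 1
\]
``since $p-n(p-2)/\alpha$ vanishes precisely at $p=p_\alpha$.'' This is not a valid argument: you have a base $u_q(x_q)\to\infty$ raised to an exponent tending to $0$, an indeterminate form. Equivalently, this is the claim $\mu_q^{\,p_\alpha-p}\to 1$, which the paper proves as a separate step (its equation (2.11)) by an energy comparison: one sandwiches $\int_{B(0,R)}v^{p_\alpha}$ between $(\xi_{\alpha,q_\alpha}(\Omega))^{-n/\alpha}-\epsilon$ and $\mu_q^{s(n-\alpha)/(p-2)}\int_\Omega u_q^p$, using that $v$ exhausts the full extremal energy and that $\int_\Omega u_q^p\to(\xi_{\alpha,q_\alpha}(\Omega))^{-n/\alpha}$. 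Without this, neither your mass computation in (iii) nor the dominated-convergence step is justified. The paper also uses $\mu_q^s\to 1$ in (ii), where the uniform global bound is obtained not by a dyadic bootstrap but by Kelvin-transforming $v_q$ and showing the transformed function $w_q$ is bounded near the origin via an HLS-based $L^{p_1}$ iteration; your dyadic scheme is plausible but you have not indicated how the first step of the induction is obtained from $v_q\le 1$ alone.
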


For the semi-linear elliptic equation
 \begin{equation}\label{Sob-critical-1}
  \begin{cases}
  -\Delta u=n(n-2)u^{p-1}, &\quad u>0, \quad \text{in}\quad \Omega,\\
  u=0,&\quad\text{on}\quad \partial\Omega,
  \end{cases}
  \end{equation}
where $\Omega\subset \mathbb{R}^n (n\ge 3)$ is a smooth bounded domain, $p\in(2,2^*)$, $2^*:=\frac{2n}{n-2}$ is the critical Sobolev exponent,
the blowup behaviour of the extremal energy positive solutions as $p\to (2^*)^-$ (i.e., the sequence of positive solutions which is a minimizing sequence for the Sobolev inequality) has been studied extensively, see for example \cite{BP1989,FluWei97MM,Han,Rey89MM,Rey90JFA}. 
Comparing with the semi-linear elliptic equation, no accurate form of the Green's function corresponding to the integral equation \eqref{HB-sub-1} can be used directly, which is the big difference. Again due to this difference, unlike what has been proved by Flucher and Wei \cite{FluWei97MM} for the semi-linear elliptic equation \eqref{Sob-critical-1}, the location of the blowup point $x_0$ is still not clear in Theorem \ref{BL-alpha-small}, which is our next concern. 
Another difference comes from the nonlocal property of the integral equation \eqref{HB-sub-1}. Thus some different techniques are needed even we follow the line of \cite{Han}.

\subsection{For $\alpha>n$} In this case the integral equation $(\mathcal{P}_q)$ is the Euler-Lagrange equation for the mimimizer (if the infimum is attained) of
\begin{eqnarray*}
\widehat{\xi}_{\alpha,q}(\Omega)=\inf_{f\in L^q(\Omega),f\ge 0,f\neq 0}\frac{\int_{\Omega} \int_{\Omega} f(x)|x-y|^{-(n-\alpha)} f(y) dx dy}{||f||^2_{L^q(\Omega)}},
\end{eqnarray*}
where we still denote $L^q(\Omega):=\{f| \int_{\Omega}|f|^q dx<\infty\}$, and $||f||_{L^q(\Omega)}:=(\int_{\Omega}|f|^q dx)^{\frac{1}{q}}$ even it is not a norm for $0<q\le q_\alpha<1$.

Due to the sharp  reversed HLS inequality \cite{DZ2015}, we have proved in \cite{DGZ2017} that
    $\xi_{\alpha,q_\alpha} (\Omega)$ is not attained by any functions in smooth domain $\Omega \ne \mathbb{R}^n$,  and $\xi_{\alpha,q} (\Omega)$ is always attained by a minimizer $f\in C^{1}(\overline{\Omega}), f>0,$ in the subcritical case $0<q<q_\alpha$ in any smooth bounded domain $\Omega$. That is, for smooth bounded domain $\Omega$, the integral equation $(\mathcal{P}_q)$ admits an energy minimizing positive solution (i.e., a positive solution which is a minimizer to $\widehat{\xi}_{\alpha}(\Omega)$) $f_q\in C^{1}(\overline{\Omega})$ in the subcritical case  $0<q<q_\alpha$, but admits no any energy minimizing positive solution in the critical case $q=q_\alpha$.
    
  Hence, for $\alpha>n$, we again claim that the energy minimizing positive solution $f_q$ to $(\mathcal{P}_q)$ must blow up as $q\to (q_\alpha)^-$, that is, $\max\limits_{x\in\overline{\Omega}}f_q\to +\infty$  as $q\to (q_\alpha)^-$ (see (\rmnum 1) of the following Theorem \ref{BL-alpha-big}). Then it is interesting to study the blowup behaviour of the energy minimizing positive solution $f_q$ to $(\mathcal{P}_q)$ as $q\to (q_\alpha)^-$.

Denote $u_q(x)=f_q^{q-1}(x)$. Then $u_q$ satisfies \eqref{HB-sub-1} and, for convenience, we also call $u_q$ an energy minimizing positive solution. Notice that $0<q<q_\alpha$ is equivalent to $p_\alpha<p<0$ and $q\to (q_\alpha)^-$ is equivalent to $p\to (p_\alpha)^+$. 

We point out that when $\alpha>n$ the integral equation \eqref{HB-sub-1} is of negative power. We have the following analogue of Theorem \ref{BL-alpha-small}.
\begin{theorem}\label{BL-alpha-big}
Let $\alpha>n$ and $\Omega$ a smooth bounded domain. For $0<q<q_\alpha$, if $u_q$ is an energy minimizing positive solution to \eqref{HB-sub-1}, then as $q\to (q_\alpha)^-$, up to a subsequence,

(\rmnum 1). $\min\limits_{x\in\overline\Omega}u_q(x):=u_q(x_q)\to 0$, and $x_q$ will stay away from $\partial\Omega$.

(\rmnum 2).  $u_q(x)\ge C(\frac{\mu_q}{\mu_q^2+|x-x_q|^2})^\frac{n-\alpha}{2}$, where $\mu_q= u_q^{-\frac {p-2}{\alpha}}(x_q)$.

(\rmnum 3). $u_q(x_q)u_q(x) \to \frac{\sigma_{n,\alpha} }{|x-x_0|^{n-\alpha}}$ if $x\neq x_0,\  x\in\overline{\Omega}$, where $x_0\in\Omega$ is the unique point such that $x_q\to x_0$.

\end{theorem}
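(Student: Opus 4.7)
The plan is to mirror the blowup analysis used for Theorem \ref{BL-alpha-small}, with ``max'' replaced by ``min,'' the classical HLS inequality replaced by the reversed HLS inequality from \cite{DZ2015}, and the bubble extremals on $\R^n$ taking the form $V(x)=(1+|x|^2/\lambda_0^2)^{(\alpha-n)/2}$ (positive exponent because $\alpha>n$). For (\rmnum{1}), if $u_q(x_q)\geq c_0>0$ along a subsequence, then $p-1<0$ forces $u_q^{p-1}\leq c_0^{p-1}$; since the kernel $|x-y|^{\alpha-n}$ is continuous and bounded on $\overline\Omega\times\overline\Omega$, \eqref{HB-sub-1} yields a uniform $C^1$ bound on $u_q$, and a $C^0(\overline\Omega)$ limit along a subsequence is an energy minimizing positive solution at $q=q_\alpha$, contradicting \cite{DGZ2017}. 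If instead $x_q\to x_0\in\partial\Omega$, the rescaled sequence of Step (\rmnum{2}) below would converge, after flattening the boundary, to a minimizer of the reversed HLS functional on a half-space, contradicting the fact from \cite{DZ2015} that the sharp constant is attained only by bubbles on all of $\R^n$.

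For (\rmnum{2}) set
\[
v_q(x):=u_q(x_q)^{-1}\,u_q(x_q+\mu_q x),\qquad \mu_q=u_q(x_q)^{-(p-2)/\alpha},
\]
so $v_q(0)=1$, $v_q\geq 1$, $\Omega_q:=\mu_q^{-1}(\Omega-x_q)\to\R^n$, and $v_q$ solves $v_q(x)=\int_{\Omega_q}v_q^{p-1}(y)|x-y|^{\alpha-n}dy$. Evaluating at $x=0$ gives the conserved identity $\int_{\Omega_q}v_q^{p-1}(y)|y|^{\alpha-n}dy=1$; combined with $v_q^{p-1}\leq 1$ this yields a local $L^\infty$ bound on $v_q$, by splitting the integral at radius $3R$ and using $|x-y|\leq \tfrac43 |y|$ for $|y|\geq 3R$ in the outer part. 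A regularity bootstrap through the integral operator then produces a $C^0_{\mathrm{loc}}$ subsequential limit $V\geq 1$ with $V(0)=1$, solving the critical equation on $\R^n$; the rigidity in \cite{DZ2015} forces $V$ to be the bubble above, which after undoing the scaling gives (\rmnum{2}) on $B_{R\mu_q}(x_q)$. To extend beyond this ball, insert the local bound into
\[
u_q(y)\geq\int_{B_{R\mu_q}(x_q)}u_q^{p-1}(z)|y-z|^{\alpha-n}\,dz,
\]
change variables $z=x_q+\mu_q w$, and use $|y-z|\geq|y-x_q|/2$; the scaling identity $u_q(x_q)^{p-1}\mu_q^n=u_q(x_q)^{-1}(1+o(1))$ at the critical exponent reproduces the tail $\mu_q^{(n-\alpha)/2}|y-x_q|^{\alpha-n}$ of the bubble.

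For (\rmnum{3}), fix $x\neq x_0$ and split
\[
u_q(x)=\int_{B_{R\mu_q}(x_q)}u_q^{p-1}(y)|x-y|^{\alpha-n}dy+\int_{\Omega\setminus B_{R\mu_q}(x_q)}u_q^{p-1}(y)|x-y|^{\alpha-n}dy.
\]
Rescaling the first piece via $y=x_q+\mu_q w$, using $v_q\to V$ and $|x-x_q-\mu_q w|\to|x-x_0|$ uniformly on $B_R$, and then letting $R\to\infty$ gives $u_q(x_q)u_q(x)\to |x-x_0|^{\alpha-n}\int_{\R^n}V^{p_\alpha-1}(w)\,dw$, and the explicit form of $V$ identifies the integral with $\sigma_{n,\alpha}$. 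The second piece is $o(1/u_q(x_q))$: the lower bound (\rmnum{2}) combined with $p-1<0$ yields the upper estimate $u_q^{p-1}(y)\leq c\mu_q^{-(n+\alpha)/2}|y-x_q|^{-(n+\alpha)}$ on the complement, and a direct integration shows its contribution is lower order compared with the bubble term.

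The central obstacle is the local $L^\infty$ bound on $v_q$ in Step (\rmnum{2}): unlike the case $1<\alpha<n$, here the kernel grows at infinity and $u^{p-1}$ is a decreasing function of $u$, so the usual Riesz-potential bootstrap has to be rearranged, and it is the conserved identity $\int v_q^{p-1}|y|^{\alpha-n}dy=1$ coming from the normalization $v_q(0)=1$ that makes everything go through. Ruling out boundary blowup in Step (\rmnum{1}) is the other subtle ingredient and relies on the same rigidity that reversed HLS extremals live only on $\R^n$.
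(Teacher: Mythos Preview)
Your argument for the first half of (\rmnum{1}) is fine and in fact slightly cleaner than the paper's: since the kernel $|x-y|^{\alpha-n}$ is bounded on $\overline\Omega\times\overline\Omega$ when $\alpha>n$, a uniform lower bound on $u_q$ immediately forces a uniform upper bound via the equation, so the paper's two-case split is unnecessary. The genuine gap is in how you exclude $x_q\to\partial\Omega$. Your plan is to rescale about $x_q$, flatten, pass to a limit $V$ on a half-space $H$, and invoke rigidity. But to make this work you would need (a) that the limit is a \emph{minimizer} of the reversed HLS functional on $H$ and not merely a solution of the integral equation there, (b) that $\widehat\xi_{\alpha,q_\alpha}(H)=\widehat\xi_{\alpha,q_\alpha}(\R^n)$, and (c) a non-attainment result on $H$. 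None of these are supplied by the cited references; (c) in particular is a separate nonexistence statement, and ``flattening the boundary'' is not innocuous because the kernel $|x-y|^{\alpha-n}$ is not preserved under diffeomorphisms. The paper avoids all of this by running the method of moving planes (directly for strictly convex $\Omega$, and after a Kelvin transform at each boundary point in general) to show that $u_q$ is monotone \emph{decreasing} along inward directions near $\partial\Omega$; together with the uniform bound $\|f_q\|_{L^q}\ge c>0$ this yields a uniform positive lower bound for $u_q$ in a boundary collar, independent of $q$, so the minimum point cannot lie there.

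Your treatments of (\rmnum{2}) and (\rmnum{3}) are essentially sound and parallel the paper's, though by somewhat different mechanics: the paper first establishes the two-sided bound $C_1(1+|z|)^{\alpha-n}\le v_q(z)\le C_2(1+|z|)^{\alpha-n}$ via the claim $\liminf_q\int_{\Omega_\mu}v_q^{p-1}>0$ and finishes with a Kelvin-transform argument, whereas you get the tail by feeding the local bound back into the equation; for (\rmnum{3}) the paper proves the concentration lemma $u_q(x_q)u_q^{p-1}\to\sigma_{n,\alpha}\delta_{x_0}$ and then shows equicontinuity of $u_q(x_q)u_q$ away from $x_0$, while your direct splitting at radius $R\mu_q$ amounts to the same thing. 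Two small corrections: in your remainder estimate the exponent on $\mu_q$ should be $+(n+\alpha)/2$, not $-(n+\alpha)/2$ (the conclusion survives), and both your tail extension in (\rmnum{2}) and your scaling identity $u_q(x_q)^{p-1}\mu_q^n=u_q(x_q)^{-1}(1+o(1))$ implicitly rely on $\mu_q^{p-p_\alpha}\to 1$, which the paper proves separately and which you should not leave unstated.
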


 For $n=1, \alpha=2$, similar blowup analysis for the following semilinear elliptic equation with negative power 
\begin{equation*}
4 u_{\theta \theta}+ u=R(\theta){u^{-3+\epsilon}}, \ \ u>0, \ \ \mbox{on} \ \  \mathbb{S}^1
\end{equation*} 
 was carried out in \cite{JWW11} and \cite{GZ16}, where $0<\epsilon<2$.  As in \cite{GZ16} we call $x_0$ in Theorem \ref{BL-alpha-big} the blowup point of $u_q$, which is actually the most important point since $u_q(x_0)\to 0$ and $u_q(x)\to \infty$ for $x\neq x_0, x\in\overline{\Omega},$  as $q\to (q_\alpha)^-$. We can see that for  $\alpha>n$, the negative power does bring some differences 
(for example, no Nash-Moser iteration can be used) comparing with the case $1<\alpha<n$ in carrying out the blowup analysis.

\section{Blowup behaviour for $\alpha\in(1,n)$}

\noindent
{\bf Proof of (\rmnum1) of Theorem \ref{BL-alpha-small}}. Let $u_q(x_q):=\max\limits_{\overline \Omega} u_q(x)$. We first prove $u_q(x_q) \to \infty$ as $q \to (q_\alpha)^+$. By contrary, we assume $u_q(x) \le C$ uniformly, then by the results in \cite{DZ2017} it is easy to see that the $C^1$ norm of $u_q(x)$ is also uniformly bounded, thus $u_q(x)$ is equicontinuous. Then we conclude that $u_q(x) \to u^*(x)\ge 0$ pointwise as $q \to (q_\alpha)^+$ and $u^*(x)$ is a nonnegative solution to  \eqref{HB-sub-1}. So $f^*=(u^*)^{p_\alpha-1}$  is a nonnegative solution to  $(\mathcal{P}_{q_\alpha})$. Notice that $f_q=u_q^{p-1}$ is the energy maximizing positive solution to $(\mathcal{P}_q)$, and  $\xi_{\alpha,q}(\Omega)\to \xi_{\alpha,q_\alpha}(\Omega)>0$ as $q\to (q_\alpha)^+$ (see \cite{DZ2017}). Then  
\begin{equation}\label{Bas-equ1}
||f_q||_{L^q(\Omega)}=(\xi_{\alpha,q} (\Omega))^{\frac{1}{q-2}}\ge\frac{(\xi_{\alpha,q_\alpha}(\Omega))^{\frac{1}{q_\alpha-2}}}{2}>0
\end{equation}
as $q$ close to $q_\alpha$ and $||f_q||_{L^q(\Omega)}\to ||f^*||_{L^{q_\alpha}(\Omega)}$  as $q \to (q_\alpha)^+$. So $f^*\not\equiv 0$ and thus is positive pointwise, and $f^*$ is an energy maximizing positive solution to $(\mathcal{P}_{q_\alpha})$, which is obviously of $C^1$. Hence we obtain a contradiction.

So we have $u_q(x_q)\to \infty,$ and
$x_q\to x_0\in\overline\Omega$, up to a subsequence.

Now we prove that $x_q$ will stay away from $\partial\Omega$ as $q\to (q_\alpha)^+$. For simplicity, in this part below we write $u$ instead of $u_q$,  $f$ instead of $f_q$.

(\Rmnum1). The domain $\Omega$ is strictly convex.

By using the method of moving planes to integral equation \eqref{HB-sub-1}, which is omitted here since it is similar to Theorem 3.4 in \cite{DZ2017} (see also \cite{CFY2014}), we can prove that there exist $t_0>0, \alpha>0$ depending on the domain only, such that for every $x\in\partial\Omega$, $u(x-t\nu)$ is increasing in $t\in [0, t_0]$, where $\nu\in \mathbb{R}^n, |\nu|=1$ satisfying $(\nu,\overrightarrow{n}(x))\ge\alpha$, and$\overrightarrow{n}(x)$ is the unit outer normal of $\Omega$ at the boundary point $x$.

Now as in \cite{Han}, we know that there are $\gamma,\delta>0$ only depending on the domain $\Omega$ such that for any $x\in \{z\in\overline{\Omega}: d(z,\partial\Omega)<\delta \}$, there exists a measurable set $\Gamma_x\subset\{z\in\overline{\Omega}: d(z,\partial\Omega)>\delta/2 \}$ satisfying $meas(\Gamma_x)\ge\gamma$ and $u(y)\ge u(x)$ for any $y\in\Gamma_x$. In fact, $\Gamma_x$ can be taken to be a piece of cone with vertex at $x$. Then for any $x\in \{z\in\overline{\Omega}: d(z,\partial\Omega)<\delta \}$, by \eqref{Bas-equ1},
\begin{eqnarray*}
u(x)&\le&\frac{1}{meas(\Gamma_x)}\int_{\Gamma_x} u(y)dy\le \gamma^{-1}|\Omega|^{\frac{1}{q}}(\int_{\Omega} u^p(y)dy)^{\frac{1}{p}}\\
&=&\gamma^{-1}|\Omega|^{\frac{1}{q}}(\int_{\Omega} f^q(y)dy)^{\frac{1}{p}}\le C<\infty
\end{eqnarray*}
uniformly, which implies that $x_q$ must stay out of the region $\{z\in\overline{\Omega}: d(z,\partial\Omega)<\delta \}$ as $q\to (q_\alpha)^+$.

(\Rmnum2). The domain $\Omega$ is not necessarily strictly convex.

As in \cite{Han}, we use the Kelvin transformation at each boundary point and then apply the method of moving planes. We show the details of the argument here for the reader's convenience since it is a little different from Theorem 3.4 in \cite{DZ2017}.

Pick any point $P\in\partial\Omega$. For simplicity, we assume the ball $B(0,1)$ contacts $P$ from the exterior of $\Omega$. Let $w$ be the Kelvin transform of $u$, that is,
$$w(x)=\frac{1}{|x|^{n-\alpha}}u(\frac{x}{|x|^2}), \ \ \ \ x\in\overline{\Omega^*},$$
where $\Omega^*$ is the image of $\Omega$ under the Kelvin transform. Then $w(x)$ satisfies
\begin{equation}\label{UB-7-add}
w(x)=\int_{\Omega^*}\frac{w^{p-1}(y)}{|x-y|^{n-\alpha}}|y|^{(n-\alpha)(p-p_\alpha)} dy,\ \ \ x\in\overline{\Omega^*}.
\end{equation}
Without loss of generality we assume $P=(-1,0,\cdots,0)$ and $x_1=-1$ is the tangent plane of $\partial\Omega^*$ at $P$.  Then it is enough to prove that $w(x)$ is increasing along the $x_1$ direction in a neighbourhood of $P$.

Assume that we can move the tangent plane at $P$ along the $x_1$ direction to the limiting place $x_1=\overline{x}<0$, denoted by $T_0$, such that the reflection of $\Omega^*\cap \{x_1<\lambda\}$ with respect to $T_\lambda$ is also a subset of $\Omega^*$, where $T_\lambda=\{x\in \mathbb{R}^n \ | \ x_1=\lambda\}$, $\lambda\in (-1,\overline{x}]$. 

We denote by $\Omega_1^\prime$ the reflection of $\Omega_1:=\Omega^*\cap \{x_1<\overline{x}\}$ with respect to $T_0$.  Now we apply the method of moving planes to integral equation \eqref{UB-7-add} on $\Omega^\prime:=\Omega_1\cup \Omega_1^\prime$.

For any real number $\lambda\in (-1,\overline{x})$, define $x^\lambda=(2\lambda-x_1,x_2,\cdots,x_n)$ as the reflection of point $x=(x_1,x_2,\cdots,x_n)$ about the plane $T_\lambda$. Let
\begin{eqnarray*}
\Sigma_\lambda:=\{x=(x_1,x_2,\cdots,x_n)\in \Omega^\prime\ | \ -1<x_1<\lambda\},
\end{eqnarray*}
 $\widetilde{\Sigma}_\lambda:=\{x^\lambda~|~x\in{\Sigma}_\lambda\}$ be the reflection of $\Sigma_\lambda$ about the plane $T_\lambda$, and $\Sigma^C_\lambda=\Omega^*\backslash\Sigma_\lambda$ be the complement of $\Sigma_\lambda$ in $\Omega^*$. Set $w_\lambda(x):=w(x^\lambda).$
We shall  complete the proof  in two steps. 

\noindent \textbf{Step 1.}  We show that for $\lambda$ larger than and sufficiently close to $-1$,
\begin{equation}\label{UB-6-add}
w_\lambda(x)-w(x)\geq 0,  \quad \forall x\in \Sigma_\lambda,
\end{equation}
which can be obtained easily by  
\begin{eqnarray*}
\frac{\partial w(x)}{\partial x_1}|_{x_1=-1}
=(\alpha-n)\int_{\Omega^*}\frac{w^{p-1}(y)}{|x-y|^{n-\alpha+2}}(-1-y_1)|y|^{(n-\alpha)(p-p_\alpha)} dy>0
\end{eqnarray*}
since $w(x)\in C^1(\overline{\Omega^*})$.

\noindent \textbf{Step 2.} Plane $T_\lambda$ can be moved continuously towards right to its limiting position $T_0$ as long as inequality \eqref{UB-6-add} holds. Thus we  conclude that $w(x)$ is increasing in the $x_1$ direction for any $x\in\Omega_1$.

Define
\[\lambda_0:=\sup\{\lambda\in (-1,\overline{x})\ | \ w_\mu(y)\geq w(y),\forall y \in\Sigma_\mu, \ \ -1<\mu\leq\lambda\}.\]
We claim that $\lambda_0=\overline{x}$.
We prove it by contradiction. Suppose not, that is,  $\lambda_0<\overline{x}$.

We first  show that
\[w_{\lambda_0}(x)-w(x)>0\]
in the interior of $\Sigma_{\lambda_0}$.

In fact, since $|x-y|<|x-y^{\lambda_0}|$ for $x, y\in{\Sigma}_{\lambda_0}$, we have
\begin{eqnarray}\nonumber
&&w_ {\lambda_0}(x)-w(x)\\\nonumber
&=&\int_{\Sigma_{\lambda_0}}\big[\frac1{|x^{\lambda_0}-y|^{n-\alpha}}-\frac1{|x-y|^{n-\alpha}}\big]{w^{p-1}(y)}|y|^{(n-\alpha)(p-p_\alpha)}dy\\\nonumber
&&+\int_{\widetilde{\Sigma}_{\lambda_0}}\big[\frac1{|x^{\lambda_0}-y|^{n-\alpha}}-\frac1{|x-y|^{n-\alpha}}\big]{w^{p-1}(y)}|y|^{(n-\alpha)(p-p_\alpha)}dy\\\nonumber
&&+\int_{\Sigma^C_{\lambda_0}\backslash\widetilde{\Sigma}_{\lambda_0}}\big[\frac1{|x^{\lambda_0}-y|^{n-\alpha}}-\frac1{|x-y|^{n-\alpha}}\big]{w^{p-1}(y)}|y|^{(n-\alpha)(p-p_\alpha)}dy\\\nonumber
&=&\int_{\Sigma_{\lambda_0}}\big[\frac1{|x-y^{\lambda_0}|^{n-\alpha}}-\frac1{|x-y|^{n-\alpha}}\big]{w^{p-1}(y)}|y|^{(n-\alpha)(p-p_\alpha)}dy\\\nonumber
&&-\int_{\Sigma_{\lambda_0}}\big[\frac1{|x-y^{\lambda_0}|^{n-\alpha}}-\frac1{|x-y|^{n-\alpha}}\big]{w^{p-1}_{\lambda_0}(y)}|y_{\lambda_0}|^{(n-\alpha)(p-p_\alpha)}dy\\\nonumber
&&+\int_{\Sigma^C_{\lambda_0}\backslash\widetilde{\Sigma}_{\lambda_0}}\big[\frac1{|x^{\lambda_0}-y|^{n-\alpha}}-\frac1{|x-y|^{n-\alpha}}\big]{w^{p-1}(y)}|y|^{(n-\alpha)(p-p_\alpha)}dy\\\nonumber
&\ge&\int_{\Sigma_{\lambda_0}}\big[\frac1{|x-y|^{n-\alpha}}-\frac1{|x-y^{\lambda_0}|^{n-\alpha}}\big]({w^{p-1}_{\lambda_0}(y)}-{w^{p-1}(y)})|y|^{(n-\alpha)(p-p_\alpha)}dy\\\nonumber
&&+\int_{\Sigma^C_{\lambda_0}\backslash\widetilde{\Sigma}_{\lambda_0}}\big[\frac1{|x^{\lambda_0}-y|^{n-\alpha}}-\frac1{|x-y|^{n-\alpha}}\big]{w^{p-1}(y)}|y|^{(n-\alpha)(p-p_\alpha)}dy\nonumber\\
&\geq&\int_{\Sigma^C_{\lambda_0}\backslash\widetilde{\Sigma}_{\lambda_0}}\big[\frac1{|x^{\lambda_0}-y|^{n-\alpha}}-\frac1{|x-y|^{n-\alpha}}\big]{w^{p-1}(y)}|y|^{(n-\alpha)(p-p_\alpha)}dy. \label{UB-10-add}
\end{eqnarray}
If there exists some point $\xi\in\Sigma_{\lambda_0}$ such that $w(\xi)= w_{\lambda_0}(\xi)$, then,  since $|x-y|>|x^{\lambda_0}-y|$ for $x\in{\Sigma}_{\lambda_0},y\in\Sigma^C_{\lambda_0}$, we deduce  from \eqref{UB-10-add} that
\[w(y)\equiv 0,\quad \forall y\in\Sigma^C_{\lambda_0}\backslash\widetilde{\Sigma}_{\lambda_0}.\]
This contradicts to the  assumption that $w>0$ since $\Sigma^C_{\lambda_0}\backslash\widetilde{\Sigma}_{\lambda_0}$ is not empty.  Hence $w_{\lambda_0}(x)-w(x)>0$ for all $ x \in \Sigma_{\lambda_0}.$

For any $\delta>0$, choose $0<\lambda(\delta)<\lambda_0$ but close to $\lambda_0$  such that $meas(\Sigma_{\lambda_0}\backslash\Sigma_{\lambda(\delta)})<\delta$.  There is a number $C(\delta)>0$, such that
  \[w_{\lambda_0}(x)-w(x)\geq C(\delta)>0,\  \  \  \forall x\in \Sigma_{\lambda(\delta)}.\]
  Thus, there exists $\varepsilon_1>0$ such that $\forall\lambda\in [\lambda_0,\lambda_0+\varepsilon_1)$,
\[w_{\lambda}(x)-w(x)\geq \frac{C(\delta)}2>0, \ \  \forall x\in \Sigma_{\lambda(\delta)}.\]
We can further  assume that $\varepsilon_1$ small enough so that $meas(\Sigma_{\lambda_0+\varepsilon_1}\backslash \Sigma_{\lambda(\delta)})\le 2\delta<1$.

It is easy to see
\[\Sigma^w_\lambda:=\{x\in \Sigma_\lambda  \ | \ w(x)>w_\lambda(x)\}\subset\Sigma_{\lambda_0+\varepsilon_1}\backslash \Sigma_{\lambda(\delta)},\quad ~\forall\lambda\in [\lambda_0,\lambda_0+\varepsilon_1).
\]
Similar to \eqref{UB-10-add} and by using the mean value theorem, we have:  $\forall x\in \Sigma^w_\lambda$,
\begin{eqnarray}\nonumber
0&<&w(x)-w_\lambda(x)\nonumber\\\nonumber
&\le &\int_{\Sigma_{\lambda}}\big[\frac1{|x-y|^{n-\alpha}}-\frac1{|x^{\lambda}-y|^{n-\alpha}}\big]{w^{p-1}(y)}|y|^{(n-\alpha)(p-p_\alpha)}dy\\\nonumber
&&+\int_{\widetilde{\Sigma}_{\lambda}}\big[\frac1{|x-y|^{n-\alpha}}-\frac1{|x^{\lambda}-y|^{n-\alpha}}\big]{w^{p-1}(y)}|y|^{(n-\alpha)(p-p_\alpha)}dy\\\nonumber
&\leq&C(\Omega)\int_{\Sigma_\lambda}\big[\frac1{|x-y|^{n-\alpha}}-\frac1{|x-y^\lambda|^{n-\alpha}}\big]({w^{p-1}(y)}-{w^{p-1}_\lambda(y)})dy\nonumber\\
&\leq&C(\Omega)\int_{\Sigma^w_\lambda}\big[\frac1{|x-y|^{n-\alpha}}-\frac1{|x-y^\lambda|^{n-\alpha}}\big]({w^{p-1}(y)}-{w^{p-1}_\lambda(y)})dy\nonumber\\
&\leq&C(\Omega)\int_{\Sigma^w_\lambda}\frac1{|x-y|^{n-\alpha}}({w^{p-1}(y)}-{w^{p-1}_\lambda(y)})\nonumber\\
&\leq& (p-1)C(\Omega)\int_{\Sigma^w_\lambda}\frac{w^{p-2}(y)(w(y)-w_\lambda(y))}{|x-y|^{n-\alpha}}dy,\label{UB-4-add}
\end{eqnarray}
where $C(\Omega)>0$. Since $w\in C (\overline {\Omega^*})$, we have $|w| \le c_1$. Hence by using HLS inequality and H\"{o}lder inequality, we have
\begin{eqnarray*}
\| w-w_\lambda \|_{L^r(\Sigma^w_\lambda)}&\leq& (p-1)C(\Omega)(\int_{\Sigma^w_\lambda}(\int_{\Sigma^w_\lambda}\frac{ w^{p-2}(y)[w(y)-w_\lambda(y)]}{|x-y|^{n-\alpha}}dy)^r dx)^{\frac{1}{r}}\nonumber\\
&\leq& (p-1)C(\Omega)c_1^{p-2}\|  w(y)-w_\lambda(y) \|_{L^s(\Sigma^w_\lambda)}\nonumber\\
&\leq&c|\Sigma^w_\lambda|^{\frac1s-\frac1r}\|w-w_\lambda\|_{L^r(\Sigma^w_\lambda)},\label{UB-5-add}
\end{eqnarray*}
where $r$ and $s$ satisfy
\[\frac{1}{r}= \frac{1}{s}-\frac{\alpha}{n}, \ 1\le s<r<\infty, ~\text{and} ~|\Sigma^w_\lambda|=meas(\Sigma^w_\lambda).\]
For example, we can take $r=p_\alpha, s=q_\alpha$.  Now we choose $\delta>0$ and $\varepsilon_1>0$ small enough such that
\[c|\Sigma^w_\lambda|^{\frac1s-\frac1r}\leq\frac{1}{2}.\]
It implies that
\[\|w-w_{\lambda}\|_{L^r(\Sigma^w_\lambda)}\equiv 0.\]
And hence the measure of set $\Sigma^w_\lambda$ must be zero.

We arrive at
\[w_{\lambda}(x)-w(x)\geq 0, \quad ~x\in \Sigma_\lambda, ~\forall\lambda\in [\lambda_0,\lambda_0+\varepsilon_1)\]
since $w$ is continuous. This contradicts to the definition of $\lambda_0$.
Hence, $\lambda_0=\overline{x}$. 

Thus by using the same argument of convex domain, we conclude that there is $\delta>0$ only depending on the domain $\Omega$ such that for any $x\in \{z\in\overline{\Omega^*}: d(z,\partial\Omega^*)<\delta \}$, 
\[w(x)\le C<\infty
\]
uniformly. By using the Kelvin transformation, we conclude that there exists $\delta^\prime>0$ such that $x_q$ must stay out of the region $\{z\in\overline{\Omega}: d(z,\partial\Omega)<\delta^\prime \}$ as $q\to (q_\alpha)^+$.  \hfill$\Box$

To continue, we need the following lemma, which can be seen in \cite{Lieb1983,CLO2006,Li2004}.
\begin{lemma}\label{classification-Whole space}
Let $v\in L_{loc}^{p_\alpha}(\mathbb{R}^n)$ be a positive solution to
 \begin{equation*}\label{HB-5}
v(x)=\int_{\mathbb{R}^n}\frac{v^{p_\alpha-1}(y)}{|x-y|^{n-\alpha}}dy,\quad x\in \mathbb{R}^n,
\end{equation*}
 then $v=c_1(\frac{1}{c_2+|x-y_0|^2})^\frac{n-\alpha}{2}$ for some $c_1, c_2>0,y_0\in\mathbb{R}^n$.
\end{lemma}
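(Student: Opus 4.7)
The plan is to prove the lemma by the integral form of the method of moving planes, following Chen--Li--Ou \cite{CLO2006}, a strategy entirely parallel to the bounded-domain moving planes already carried out above. The only new ingredient is that $\mathbb{R}^n$ has no boundary from which to start, which forces an initial conformal (Kelvin) reduction in order to produce decay at infinity. For an arbitrary base point $z_0\in\mathbb{R}^n$ set
\[ \bar v(x) := |x|^{-(n-\alpha)}\, v\bigl(z_0 + x/|x|^2\bigr). \]
The exponent $p_\alpha-1=(n+\alpha)/(n-\alpha)$ is exactly the one that makes the right-hand side of the integral equation conformally invariant, so a direct change of variables shows that $\bar v$ satisfies the very same integral equation on $\mathbb{R}^n\setminus\{0\}$. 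Choosing $z_0$ at a point of continuity of $v$, one obtains $\bar v(x)=O(|x|^{-(n-\alpha)})$ as $|x|\to\infty$, and hence $\bar v\in L^{p_\alpha}(\mathbb{R}^n\setminus B_1)$ since $(n-\alpha)p_\alpha=2n>n$.

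Fix a direction $e_1$ and for $\lambda\in\mathbb{R}$ set $T_\lambda=\{x_1=\lambda\}$, let $x^\lambda$ denote the reflection of $x$ across $T_\lambda$, $\Sigma_\lambda=\{x_1<\lambda\}$, $\bar v_\lambda(x)=\bar v(x^\lambda)$. Splitting the defining integral into the half-space $\Sigma_\lambda$ and its complement and reflecting the complement piece, exactly as in the derivation of \eqref{UB-10-add}, yields for $x\in\Sigma_\lambda$
\[ \bar v(x)-\bar v_\lambda(x)=\int_{\Sigma_\lambda}\!\bigl[|x-y|^{-(n-\alpha)}-|x-y^\lambda|^{-(n-\alpha)}\bigr]\bigl[\bar v^{p_\alpha-1}(y)-\bar v_\lambda^{p_\alpha-1}(y)\bigr]\,dy, \]
in which the kernel bracket is strictly positive because $|x-y|<|x-y^\lambda|$ whenever $x,y\in\Sigma_\lambda$. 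On the ``bad set'' $\Sigma^v_\lambda:=\{x\in\Sigma_\lambda:\bar v(x)>\bar v_\lambda(x)\}$, the mean value theorem together with the HLS inequality (with exponents $1/s-1/r=\alpha/n$, e.g.\ $r=p_\alpha$ and $s=p_\alpha/(p_\alpha-1)$) produces the contraction estimate
\[ \|\bar v-\bar v_\lambda\|_{L^{p_\alpha}(\Sigma^v_\lambda)}\le C\,\|\bar v\|_{L^{p_\alpha}(\Sigma^v_\lambda)}^{p_\alpha-2}\,\|\bar v-\bar v_\lambda\|_{L^{p_\alpha}(\Sigma^v_\lambda)}, \]
in the spirit of \eqref{UB-4-add}. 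The decay of $\bar v$ makes the prefactor strictly less than $1$ either for $-\lambda$ large or whenever $|\Sigma^v_\lambda|$ is small, forcing $|\Sigma^v_\lambda|=0$ and thereby starting the moving planes from the far left.

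Let $\lambda_0$ be the supremum of those $\lambda$ for which $\bar v_\mu\ge\bar v$ on $\Sigma_\mu$ for every $\mu\le\lambda$; integrability of $\bar v$ prevents $\lambda_0=+\infty$. At $\lambda_0$ either $\bar v_{\lambda_0}\equiv\bar v$, so $\bar v$ is reflection-symmetric across $T_{\lambda_0}$, or $\bar v_{\lambda_0}>\bar v$ strictly throughout the interior of $\Sigma_{\lambda_0}$; in the latter case the small-measure argument of Step 2 above, applied to $\bar v$, lets one slide past $\lambda_0$ and contradicts its maximality. Hence $\bar v$ is symmetric across $T_{\lambda_0}$. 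Repeating the argument in every coordinate direction forces $\bar v$, and therefore $v$, to be radially symmetric about some point $y_0\in\mathbb{R}^n$. Substituting $v(x)=\phi(|x-y_0|)$ into the integral equation reduces the problem to a one-dimensional identity whose only positive $L^{p_\alpha}$ solutions are, by the classification of extremals of the sharp HLS inequality \cite{Lieb1983}, the conformal bubbles $c_1\bigl(c_2+|x-y_0|^2\bigr)^{-(n-\alpha)/2}$.

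The main obstacle is starting the moving planes on the unbounded domain: there is no boundary giving immediate monotonicity, so one must exploit the decay produced by the Kelvin inversion together with the HLS contraction on small-measure sets. A secondary delicacy is the dichotomy at $\lambda_0$: that a weak inequality forces either identity or strict interior inequality. This uses the strict positivity of the kernel bracket together with $\bar v>0$, and replaces the strong maximum principle one would invoke for the PDE analogue.
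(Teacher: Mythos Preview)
The paper does not prove this lemma at all; immediately before the statement it reads ``we need the following lemma, which can be seen in \cite{Lieb1983,CLO2006,Li2004},'' and no proof follows. So there is nothing to compare your argument against in the paper itself --- what you have written is essentially a condensed version of the Chen--Li--Ou proof in one of those references.

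That said, as a self-contained proof your sketch has two genuine gaps. First, after the Kelvin inversion $\bar v$ is only defined on $\mathbb{R}^n\setminus\{0\}$ and may carry a singularity at the origin; once the moving plane $T_\lambda$ reaches $x_1=0$ the reflected point $0^\lambda$ enters $\Sigma_\lambda$, and the splitting/HLS estimate must be justified across that singular point. The cleaner route (and the one \cite{CLO2006} actually takes) is to skip the Kelvin step entirely: the equation itself forces $v^{p_\alpha-1}|x-\cdot|^{-(n-\alpha)}\in L^1(\mathbb{R}^n)$, from which one bootstraps $v\in L^{p_\alpha}(\mathbb{R}^n)$ globally, and then the prefactor $\|v\|_{L^{p_\alpha}(\Sigma_\lambda)}^{p_\alpha-2}$ is small for $\lambda\ll 0$ without any inversion. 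Second, your last step --- ``radial solution $\Rightarrow$ HLS extremal $\Rightarrow$ bubble'' --- is circular: a radial solution of the Euler--Lagrange equation is not a priori an extremal of the functional, so Lieb's classification of \emph{extremals} does not apply directly. One must either invoke Li's method of moving spheres \cite{Li2004}, which produces the explicit form $c_1(c_2+|x-y_0|^2)^{-(n-\alpha)/2}$ without a separate radial step, or supply a uniqueness argument for the one-variable integral equation satisfied by the radial profile.
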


\medskip

\noindent
{\bf Proof of (\rmnum2) of Theorem \ref{BL-alpha-small}}.
Notice that $u_q(x)=f_q^{q-1}(x)\in L^p(\Omega)$. Denote $u_q(x_q):=\max\limits_{\overline \Omega} u_q(x)$ and
 $$\mu_q:= u_q^{-\frac {p-2}{\alpha}}(x_q), \, \,  \, \, \Omega_\mu:=\frac{\Omega-x_q}{\mu_q}=\{z \ | \ z=\frac {x-x_q}{\mu_q}, \ \forall x\in \Omega\}.
$$
Define
\begin{equation*}\label{blowup-1}
v_q(z):=\mu_q^{\frac{\alpha}{p-2}} u_q(\mu_q z+x_q), \,  \ \, \, \mbox{for} \, \, z \in \overline\Omega_\mu.
\end{equation*}
Then $v_q$ satisfies
\begin{equation*}\label{sub-equ3}
v_q(z)=\int_{\Omega_\mu}\frac{v_q^{p-1}(y)}{|z-y|^{n-\alpha}}dy, \ \ \ \ z \in \overline\Omega_\mu,
\end{equation*}
and $v_q(0)=1$, $v_q(z) \in (0, 1].$

We claim $v_q(z)$ is equicontinuous on any bounded domain $\widehat{\Omega}\subset \Omega_\mu$ as $q\to (q_\alpha)^+$. In fact, for $z\in\widehat{\Omega}$, we can first write
\[v_q(z)=\int_{\Omega_\mu\setminus B(0,R)}\frac{v_q^{p-1}(y)}{|z-y|^{n-\alpha}}dy+\int_{\Omega_\mu\cap B(0,R)}\frac{v_q^{p-1}(y)}{|z-y|^{n-\alpha}}dy\]
for fixed $R>0$. On one hand, if $z=\frac {x-x_q}{\mu_q}\in \widehat{\Omega}$, then $|x-x_q|\le C \mu_q$ for some $C>0$. Hence for  $z\in \widehat{\Omega}$, $\epsilon>0$ small enough, we have
\begin{eqnarray}\nonumber
&&\int_{\Omega_\mu\setminus B(0,R)}\frac{v_q^{p-1}(y)}{|z-y|^{n-\alpha}}dy\\\nonumber
&=&\mu_q^{\frac{\alpha}{p-2}}\int_{\Omega\setminus B(x_q,R\mu_q)}\frac{u_q^{p-1}(\xi)}{|x-\xi|^{n-\alpha}}d\xi\ \ \ \ (x=x_q+\mu_q z)\\\nonumber
&\le&\mu_q^{\frac{\alpha}{p-2}}(\int_{\Omega}u_q^p(\xi)d\xi)^{\frac{p-1}{p}}
(\int_{\Omega\setminus B(x_q,R\mu_q)}|x-\xi|^{(\alpha-n)p}d\xi)^{\frac{1}{p}}\\
\nonumber
&=&\mu_q^{\frac{\alpha}{p-2}}(\int_{\Omega}u_q^p(\xi)d\xi)^{\frac{p-1}{p}}
(\int_{\Omega\setminus B(x-x_q,R\mu_q)}|\xi|^{(\alpha-n)p}d\xi)^{\frac{1}{p}}\\
\label{small-term}
&\le&C \mu_q^{\frac{\alpha}{p-2}}(R\mu_q)^{\alpha-\frac{n}{q}}=CR^{\alpha-\frac{n}{q}}\mu_q^{\frac{q(n+\alpha)-2n}{q(2-q)}}<\epsilon
\end{eqnarray}
as $q$ close to $q_\alpha$ and $R>0$ large enough, where we use $\alpha<n/q$ as $q$ close to $q_\alpha$. On the other hand, it is easy to see that $\int_{\Omega_\mu\cap B(0,R)}\frac{v_q^{p-1}(y)}{|z-y|^{n-\alpha}}dy\in C^1(\widehat{\Omega})$. Hence
for $z_1,z_2\in \widehat{\Omega}$,
\begin{eqnarray}\nonumber
&&|\int_{\Omega_\mu\cap B(0,R)}\frac{v_q^{p-1}(y)}{|z_1-y|^{n-\alpha}}dy-\int_{\Omega_\mu\cap B(0,R)}\frac{v_q^{p-1}(y)}{|z_2-y|^{n-\alpha}}dy|\\\nonumber
&\le& C\int_{\Omega_\mu\cap B(0,R)}v_q^{p-1}(y)\frac{1}{|\xi-y|^{n-\alpha+1}}dy|z_1-z_2| \\\label{main-term}
&\le& C R^{\alpha-1}|z_1-z_2|,
\end{eqnarray}
where $\xi=tz_1+(1-t)z_2$ for some $t\in(0,1)$. By \eqref{small-term} and \eqref{main-term} we conclude that $v_q(z)$ is equicontinuous on the bounded domain $\widehat{\Omega}\subset \Omega_\mu$ as $q \to (q_\alpha)^+$.

As $q\to (q_\alpha)^+$, since $x_q$ will stay away from $\partial\Omega$, then we have  
  $\Omega_\mu \to \Bbb{R}^n$,
 $v_q(z) \to v(z) \in C_{loc}^\gamma(\Bbb{R}^n)$ uniformly for any $0<\gamma<1$, where $v(z)$ satisfies
$$
v(z)= \int_{\Bbb{R}^n} \frac {v^{p_\alpha-1}(y)}{|z-y|^{n-\alpha}} dy, \ \ \ \ v(0)=1.
$$
We also have $v(z)\in L^{p_\alpha}(\Bbb{R}^n)$ since
\begin{equation}\label{sub-equ4-add1-1}
\int_{\Bbb{R}^n}v^{p_\alpha}(z) dz\le \liminf_{q \to (q_\alpha)^+}\int_{\Omega_\mu} v_q^p(z)dz=\liminf_{q \to (q_\alpha)^+}\mu_q^{\frac{p\alpha}{p-2}-n}\int_{\Omega} u_q^p(x)dx\le C.
\end{equation}
By Lemma \ref{classification-Whole space} and noticing that $\max\limits_{z\in \R^n} v(z)=v(0)$, we know
\begin{equation}\label{sub-equ4-add1}
v(z)=c_1(\frac{1}{c_2+|z|^2})^\frac{n-\alpha}{2}
\end{equation} for some $c_1,c_2>0$ satisfying
\begin{equation}\label{sub-equ5-add1}
c_1\cdot c_2^{\frac{\alpha-n}{2}}=1.
\end{equation}
By \cite{DZ2017} we know $\xi_{\alpha,q_\alpha}(\Omega)=\xi_{\alpha,q_\alpha}(\R^n)$. Then
 $v^{p_\alpha-1}$ must be an extremal function to $\xi_{\alpha,q_\alpha}(\R^n)$ since
\begin{eqnarray*}
\xi_{\alpha,q_\alpha}(\R^n)&\ge&\frac{\int_{\Bbb{R}^n} v^{p_\alpha}(y)dy}{(\int_{\Bbb{R}^n} v^{p_\alpha}(y)dy)^{\frac{2}{q_\alpha}}}=(\int_{\Bbb{R}^n} v^{p_\alpha}(y)dy)^{-\frac{\alpha}{n}}\\
&\ge & (\liminf_{q \to (q_\alpha)^+}\mu_q^{\frac{p\alpha}{p-2}-n}\int_{\Omega} u_q^p(x)dx)^{-\frac{\alpha}{n}}\\
&\ge & (\liminf_{q \to (q_\alpha)^+}\int_{\Omega} u_q^p(x)dx)^{-\frac{\alpha}{n}}\\
&=&\xi_{\alpha,q_\alpha}(\Omega)=\xi_{\alpha,q_\alpha}(\R^n),
\end{eqnarray*}
where we use \eqref{Bas-equ1}, \eqref{sub-equ4-add1-1}, $2<p<p_\alpha$ and $\mu_q\to 0^+$ as $q \to (q_\alpha)^+$.  Therefore
\begin{equation}\label{sub-equ5}
\int_{\Bbb{R}^n} v^{p_\alpha}(y)dy=(\xi_{\alpha,q_\alpha}(\Bbb{R}^n))^{-\frac{n}{\alpha}}.
\end{equation}

 Notice that $p\to (p_\alpha)^-$ is equivalent to $q\to (q_\alpha)^+$.
We claim that
\begin{equation}\label{sub-equ6}
\mu_q^s\to 1, \ \ \mbox{as}\ \ p\to (p_\alpha)^-,
\end{equation}
where $s=p_\alpha-p\to 0^+$. In fact, it is easy to see that $\mu_q^s\le 1$. On the other hand, we know for any $R>0$, $v_q(z) \to v(z) \ \mbox{in}~ B(0,R)\cap \Omega_\mu$ uniformly as $q\to (q_\alpha)^+$. By \eqref{sub-equ5}, for any $\epsilon>0$ small enough, there exists $R>0$ large enough such that
\begin{eqnarray}\label{sub-equ7}
&&(\xi_{\alpha,q_\alpha}(\Omega))^{-\frac{n}{\alpha}}-\epsilon \\\nonumber
&\le& \int_{B(0,R)}v^{p_\alpha}(z)dz=\lim_{p\to (p_\alpha)^-}\int_{B(0,R)\cap \Omega_\mu}v_q^p(z)dz\\\nonumber
&=&\lim_{p\to (p_\alpha)^-}\mu_q^{\frac{p\alpha}{p-2}-n}\int_{B(x_q,R\mu_q)\cap\Omega}u_q^p(x)dx\\\nonumber
&\le& \lim_{p\to (p_\alpha)^-}\mu_q^{\frac{s(n-\alpha)}{p-2}}\int_{\Omega}u_q^p(x)dx\\\nonumber
&=&(\xi_{\alpha,q_\alpha}(\Omega))^{-\frac{n}{\alpha}}\lim_{p\to (p_\alpha)^-}\mu_q^{\frac{s(n-\alpha)}{p-2}},
\end{eqnarray}
where we use \eqref{Bas-equ1} in the last equality. Thus the claim \eqref{sub-equ6} holds by using the arbitrary chosen of $\epsilon$.

To continue, let $w_q(\xi), \xi\in \Omega_\mu^*$ be the Kelvin transform of $v_q$, 
where $\Omega_\mu^*$ is the image of $\Omega_\mu$ under the Kelvin transform. We have
\begin{eqnarray}\label{kelvin-euq1}
w_q(\xi)&=&\int_{\Omega_\mu^*}\frac{w_q^{p-1}(\eta)}{|\xi-\eta|^{n-\alpha}}|\eta|^{(n-\alpha)(p-1)-(n+\alpha)} d\eta\\\nonumber
&\le &C \int_{\Omega_\mu^*}\frac{w_q^{p-1}(\eta)}{|\xi-\eta|^{n-\alpha}}\mu_q^{(n-\alpha)(p-1)-(n+\alpha)}d\eta\\\nonumber
&\le &C \int_{\Omega_\mu^*}\frac{w_q^{p-1}(\eta)}{|\xi-\eta|^{n-\alpha}}d\eta
\end{eqnarray}
by using \eqref{sub-equ6}.

For fixed $1>r>0$ small, which will be determined later, we have $w_q(\xi)\le C(r)$ for $|\xi|\ge \frac{r}{2}$. On the other hand, for $\xi\in B^*(0,\frac{r}{2}):=B(0,\frac{r}{2})\cap\Omega_\mu^*$, by \eqref{kelvin-euq1},
\begin{equation*}\label{kelvin-euq2}
w_q(\xi)\le C\int_{B^*(0,r)}\frac{w_q^{p-1}(\eta)}{|\xi-\eta|^{n-\alpha}}d\eta+C(r).
\end{equation*}
Take $t=\frac{p}{p-1}>q_\alpha$ and $p_1>0$ satisfies $\frac{1}{p_1}=\frac{1}{t}-\frac{\alpha}{n}$, which implies that $p_1>p_\alpha$. Then
\begin{eqnarray*}
&&(\int_{B^*(0,\frac{r}{2})}w_q^{p_1}(\xi)d\xi)^{\frac{1}{p_1}}\\
&\le & C(\int_{B^*(0,r)}(\int_{B^*(0,r)}\frac{w_q^{p-1}(\eta)}{|\xi-\eta|^{n-\alpha}}d\eta)^{p_1}d\xi)^{\frac{1}{p_1}}+C(r)\\
&\le & C\|w_q^{p-1}\|_{L^t(B^*(0,r))}+C(r)\\
&=&C(\int_{B^*(0,r)}(w_q^{p-2}(\eta)w_q(\eta))^t d\eta)^{\frac{1}{t}}+C(r)\\
&\le & C(\int_{B^*(0,r)}w_q^{(p-2)\frac{p_1 t}{p_1-t}}(\eta) d\eta)^{\frac{p_1-t}{p_1 t}}(\int_{B^*(0,r)}w_q^{t\cdot\frac{p_1}{t}}(\eta) d\eta)^{\frac{1}{p_1}}+C(r)\\
&=& C(\int_{B^*(0,r)}w_q^{(p-2)\cdot\frac{n}{\alpha}}(\eta) d\eta)^{\frac{\alpha}{n}}(\int_{B^*(0,r)}w_q^{p_1}(\eta) d\eta)^{\frac{1}{p_1}}+C(r).
\end{eqnarray*}
Notice that $(p-2)\cdot\frac{n}{\alpha}<p$. Then as in \eqref{sub-equ7}, for $\epsilon_0>0$ small enough, there exists $r>0$ small enough such that 
\begin{eqnarray*}
&&\int_{B^*(0,r)} w_q^{(p-2)\cdot\frac{n}{\alpha}}\le C(\int_{B^*(0,r)} w_q^p)^{\frac{(p-2)\frac{n}{\alpha}}{p}}\\
&\le& C(\int_{\Omega_\mu \setminus B(0,\frac{1}{r})} v_q^p)^{\frac{(p-2)\frac{n}{\alpha}}{p}}\le \epsilon_0
\end{eqnarray*}
 uniformly as $q$ close to $q_\alpha$, and then
\[C(\int_{B^*(0,r)}w_q^{(p-2)\cdot\frac{n}{\alpha}}(\eta) d\eta)^{\frac{\alpha}{n}}\le \frac{1}{2}\] as $q$ close to $q_\alpha$. 
Hence
\begin{eqnarray*}
&&(\int_{B^*(0,\frac{r}{2})}w_q^{p_1}(\xi)d\xi)^{\frac{1}{p_1}}\\
&\le& \frac{1}{2}(\int_{B^*(0,r)}w_q^{p_1}(\eta) d\eta)^{\frac{1}{p_1}}+C(r)\\
&\le& \frac{1}{2}(\int_{B^*(0,\frac{r}{2})}w_q^{p_1}(\eta) d\eta)^{\frac{1}{p_1}}+C(r).
\end{eqnarray*}
Then $w_q\in L^{p_1}(B^*(0,\frac{r}{2}))$ and $\|w_q\|_{L^{p_1}(B^*(0,\frac{r}{2}))}\le C(r)$ uniformly as $q\to (q_\alpha)^+$.
Thus similar to Lemma 3.3 in \cite{DZ2017}, we know $w_q\in L^\infty(B^*(0,\frac{r}{2}))$ uniformly as $q\to (q_\alpha)^+$ and thus we conclude.
\hfill$\Box$

\medskip

Now to prove (\rmnum3) of Theorem \ref{BL-alpha-small}, we need the following lemma.
\begin{lemma}\label{est-func-u-small} We have
$u_q^{p-1}(x)u_q(x_q)\to \sigma_{n,\alpha} \delta_{x_0}(x)$ as $p\to (p_\alpha)^-$.
\end{lemma}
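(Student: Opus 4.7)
The plan is to prove the assertion in the sense of measures: for every test function $\phi\in C(\overline{\Omega})$, I would show
\begin{equation*}
\lim_{p\to(p_\alpha)^-}\int_\Omega u_q^{p-1}(x)\,u_q(x_q)\,\phi(x)\,dx=\sigma_{n,\alpha}\,\phi(x_0).
\end{equation*}
The natural first move is to rescale via $x=x_q+\mu_q z$. Since $u_q(x_q+\mu_q z)=\mu_q^{-\alpha/(p-2)}v_q(z)$ and $u_q(x_q)=\mu_q^{-\alpha/(p-2)}$, the integral becomes
\begin{equation*}
\mu_q^{\,n-\alpha p/(p-2)}\int_{\Omega_\mu}v_q^{p-1}(z)\,\phi(x_q+\mu_q z)\,dz.
\end{equation*}
A direct computation gives $n-\alpha p/(p-2)=-(n-\alpha)(p_\alpha-p)/(p-2)$, so by the refined limit \eqref{sub-equ6} (which says $\mu_q^{p_\alpha-p}\to 1$) combined with boundedness of the ratio $(n-\alpha)/(p-2)$, the prefactor tends to $1$.

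Next I would split the rescaled integral into $B(0,R)\cap\Omega_\mu$ and its complement. On the bulk piece, the uniform convergence $v_q\to v$ on compacta established in the proof of part (ii), together with the uniform bound $0<v_q\le 1$ and $p\to p_\alpha$, forces $v_q^{p-1}\to v^{p_\alpha-1}$ uniformly on $B(0,R)$; meanwhile $\phi(x_q+\mu_q z)\to\phi(x_0)$ uniformly on $B(0,R)$ since $\mu_q\to 0$ and $x_q\to x_0$. Hence the bulk piece converges to $\phi(x_0)\int_{B(0,R)}v^{p_\alpha-1}\,dz$. For the tail, the decay estimate (ii) gives $u_q(x)\le C\,\mu_q^{(n-\alpha)/2}|x-x_q|^{-(n-\alpha)}$ whenever $|x-x_q|\ge R\mu_q$. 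Inserting this into the integral and accounting for the various powers of $\mu_q$ (using again that $\mu_q^{p_\alpha-p}\to 1$) yields a uniform bound of the form $C\|\phi\|_\infty R^{-\alpha}$, which vanishes as $R\to\infty$.

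It then remains to identify the total mass $\int_{\R^n}v^{p_\alpha-1}(z)\,dz$ with $\sigma_{n,\alpha}$. Writing $v(z)=c_1(c_2+|z|^2)^{-(n-\alpha)/2}$ with the constraint $c_1c_2^{(\alpha-n)/2}=1$ from \eqref{sub-equ5-add1}, I would evaluate the integral equation defining $v$ at $z=0$ to get a second relation $1=c_1^{p_\alpha-1}c_2^{-n/2}I$, where $I=\int_{\R^n}(1+|w|^2)^{-(n+\alpha)/2}|w|^{-(n-\alpha)}\,dw$. Polar coordinates and the Beta function identity give $I=\pi^{n/2}\Gamma(\alpha/2)/\Gamma((n+\alpha)/2)$. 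Eliminating $c_1,c_2$ between the two constraints and computing directly then yields $\int_{\R^n}v^{p_\alpha-1}\,dz=I^{(\alpha-n)/\alpha}=\sigma_{n,\alpha}$, as required.

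The main obstacle is the power-counting bookkeeping: one must verify not only that $\mu_q^{n-\alpha p/(p-2)}\to 1$ (which genuinely needs \eqref{sub-equ6}, since $\mu_q\to 0$ alone is insufficient when raised to a vanishing power) but also that the tail estimate produces an exponent $-(n-\alpha)(p-1)$ on $|x-x_q|$ that remains strictly less than $-n$ uniformly for $p$ close to $p_\alpha$, giving the uniform-in-$p$ decay $O(R^{-\alpha})$ needed to exchange the limits in $p$ and $R$.
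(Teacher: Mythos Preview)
Your proposal is correct and follows essentially the same strategy as the paper: rescale to the profile $v_q$, split into a bulk and a tail, use uniform convergence $v_q\to v$ on the bulk and the decay estimate from (ii) on the tail, and then identify the total mass $\int_{\R^n}v^{p_\alpha-1}$ with $\sigma_{n,\alpha}$. The only notable difference is in the last step: the paper computes the constant by invoking the identity \eqref{sub-equ5} (which expresses $\int v^{p_\alpha}$ via the sharp HLS constant $\xi_{\alpha,q_\alpha}(\R^n)$ from Lieb) and then comparing the two Beta integrals, whereas you evaluate the integral equation for $v$ at $z=0$ to get a second relation between $c_1,c_2$ and reduce everything to the single Beta integral $I=\pi^{n/2}\Gamma(\alpha/2)/\Gamma((n+\alpha)/2)$---a self-contained route that avoids quoting Lieb's sharp constant.
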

\begin{proof}
Take any function $\varphi(x)\in C_0^\infty(\Omega)$.
For any small $r>0$, by (\rmnum2) of Theorem \ref{BL-alpha-small},
\begin{eqnarray*}
&&|\int_{\Omega\setminus B(x_0,r)}u_q^{p-1}(x)u_q(x_q)\varphi(x)dx|\\
&\le& C\int_{\Omega\setminus B(x_0,r)}\mu_q^{\frac{(n-\alpha)(p-1)}{2}}\frac{u_q(x_q)}{|x-x_0|^{(n-\alpha)(p-1)}}dx\\
&\le& C(r)\mu_q^{\frac{(n-\alpha)(p-1)}{2}-\frac{\alpha}{p-2}}\to 0
\end{eqnarray*}
as $p\to (p_\alpha)^-$. On the other hand, by using  \eqref{sub-equ6}, (\rmnum2) of Theorem \ref{BL-alpha-small}, and the Dominated Convergence Theorem, as $p\to (p_\alpha)^-$, we have
\begin{eqnarray*}
&&\int_{B(x_0,r)}u_q^{p-1}(x)u_q(x_q)\varphi(x_0)dx\\
&=& \varphi(x_0)\int_{B(\frac{x_0-x_q}{\mu_q},\frac{r}{\mu_q})}u_q^{p-1}(\mu_q z+x_q)u_q(x_q)\mu_q^n dz\\
&=& \varphi(x_0)\mu_q^{-\frac{p\alpha}{p-2}+n}\int_{B(\frac{x_0-x_q}{\mu_q},\frac{r}{\mu_q})}v_q^{p-1}(z) dz\\
&\to & \varphi(x_0)\int_{\mathbb{R}^n}v^{p_\alpha-1}(z) dz=\varphi(x_0)\sigma_{n,\alpha},
\end{eqnarray*}
since by \eqref{sub-equ4-add1}, \eqref{sub-equ5-add1}, \eqref{sub-equ5},
\begin{eqnarray*}
&&\int_{\mathbb{R}^n}v^{p_\alpha-1}(z) dz=\int_{\mathbb{R}^n}(c_1(\frac{1}{c_2+|x|^2})^{\frac{n-\alpha}{2}})^{\frac{n+\alpha}{n-\alpha}} dz\\
&=&c_2^{\frac{n}{2}}\omega_n\int_0^\infty \frac{r^{n-1}}{(1+r^2)^{\frac{n+\alpha}{2}}}dr=\frac{(\xi_{\alpha,q_\alpha}(\R^n))^{-\frac{n}{\alpha}}\int_0^\infty \frac{r^{n-1}}{(1+r^2)^{\frac{n+\alpha}{2}}}dr }{\int_0^\infty \frac{r^{n-1}}{(1+r^2)^n}dr}\\
&=&(\pi^{\frac{n}{2}}\frac{\Gamma(\frac{\alpha}{2})}{\Gamma(\frac{n+\alpha}{2})})^{\frac{\alpha-n}{\alpha}}=\sigma_{n,\alpha},
\end{eqnarray*}
where $\omega_n$ is the area of the unit sphere in $\R^n$, and 
$$\xi_{\alpha,q_\alpha}(\R^n)=\pi^{(n-\alpha) /2} \frac{\Gamma(\alpha/2)}{\Gamma(n/2+\alpha/2)} \{\frac{\Gamma(n/2)}{\Gamma(n)} \}^{-\alpha/n},$$ see \cite{Lieb1983}.
Thus
\begin{eqnarray*}
&&\int_{B(x_0,r)}u_q^{p-1}(x)u_q(x_q)\varphi(x)dx\\
&=&\int_{B(x_0,r)}u_q^{p-1}(x)u_q(x_q)(\varphi(x_0)+o_r(1))dx\\
&\to& (\varphi(x_0)+o_r(1))\sigma_{n,\alpha}
\end{eqnarray*}
as $p\to (p_\alpha)^-$, where $o_r(1)\to 0$ as $r\to 0$.
Therefore 
\[\int_{\Omega}u_q^{p-1}(x)u_q(x_q)\varphi(x)dx \to \varphi(x_0)\sigma_{n,\alpha}\]
 as $p\to (p_\alpha)^-$. Thus we conclude.
\end{proof}

\noindent
{\bf Proof of (\rmnum3) of Theorem \ref{BL-alpha-small}}. Firstly by using (\rmnum2) of Theorem \ref{BL-alpha-small} it is easy to see that there is one single point $x_0$ such that $x_q\to x_0$ as $q\to (q_\alpha)^+$, up to a subsequence, and
\begin{equation}\label{proof3-equ1}
u_q(x_q)u_q(x)\le C\frac{1}{|x-x_0|^{n-\alpha}},\ \ \ u_q(x)\to 0 \ \mbox{if} \ x\neq x_0,
\end{equation}
as $q\to (q_\alpha)^+$.
For any $\varphi\in C_0^\infty(\Omega)$, it is easy to check that $\int_\Omega\frac{\varphi(x)}{|x-y|^{n-\alpha}}dx$ is continuous in $y\in\overline\Omega$. Then by \eqref{HB-sub-1} and Lemma \ref{est-func-u-small}, for any $\varphi\in C_0^\infty(\Omega)$,
\begin{eqnarray*}
\int_\Omega u_q(x_q)u_q(x)\varphi(x)dx&=&\int_\Omega \varphi(x)dx\int_\Omega\frac{u_q^{p-1}(y)u_q(x_q)}{|x-y|^{n-\alpha}}dy\\
&=&\int_\Omega u_q^{p-1}(y)u_q(x_q)dy\int_\Omega\frac{\varphi(x)}{|x-y|^{n-\alpha}}dx\\
&\to & \sigma_{n,\alpha} \int_\Omega\frac{\varphi(x)}{|x-x_0|^{n-\alpha}}dx,
\end{eqnarray*}
as $q\to (q_\alpha)^+$.  So in order to prove that $u_q(x_q)u_q(x) \to \frac{\sigma_{n,\alpha} }{|x-x_0|^{n-\alpha}}$ pointwise for $x\neq x_0$, it is left to prove that $u_q(x_q)u_q(x)$ is uniformly bounded and equicontinuous in $\Omega \setminus B(x_0,r)$ for any $r>0$ small enough,  as $q\to (q_\alpha)^+$.

In fact, by using \eqref{proof3-equ1}, $u_q(x_q)u_q(x)$ is uniformly bounded in $\Omega \setminus B(x_0,r)$. On the other hand, for any $|x_1-x_2|<\frac{r}{2}, x_1, x_2\in \Omega \setminus B(x_0,r)$, 
\begin{eqnarray}\nonumber
&&|u_q(x_q)u_q(x_1)-u_q(x_q)u_q(x_2)|\\ \nonumber
&=&u_q(x_q)|\int_\Omega \big(\frac{1}{|x_1-y|^{n-\alpha}}-\frac{1}{|x_2-y|^{n-\alpha}}\big)u_q^{p-1}(y)dy|\\ \nonumber
&\le &(n-\alpha)u_q(x_q)\int_\Omega \frac{u_q^{p-1}(y)}{|\xi-y|^{n-\alpha+1}}dy|x_1-x_2| \\ \nonumber
&& \ \ (\xi=tx_1+(1-t)x_2, \ \mbox{for~some}\ t\in (0,1))
\\ \label{proof3-equ2}
&=&(n-\alpha)u_q(x_q)\big(\int_{B(x_0,\frac{r}{2})}+\int_{\Omega\setminus B(x_0,\frac{r}{2})}\big) \frac{u_q^{p-1}(y)}{|\xi-y|^{n-\alpha+1}}dy|x_1-x_2|. 
\end{eqnarray}
By Lemma \ref{est-func-u-small},
\begin{eqnarray}\nonumber
&&u_q(x_q)\int_{B(x_0,\frac{r}{2})}\frac{u_q^{p-1}(y)}{|\xi-y|^{n-\alpha+1}}dy\\\label{proof3-equ3}
&\le & C \int_{B(x_0,\frac{r}{2})} u_q(x_q) u_q^{p-1}(y)dy\to C\sigma_{n,\alpha},
\end{eqnarray}
as $q\to (q_\alpha)^+$. By using (\rmnum2) of Theorem \ref{BL-alpha-small}, 
\begin{eqnarray}\nonumber
&&u_q(x_q)\int_{\Omega\setminus B(x_0,\frac{r}{2})}\frac{u_q^{p-1}(y)}{|\xi-y|^{n-\alpha+1}}dy\\\nonumber
&\le & C u_q(x_q) \int_{\Omega\setminus B(x_0,\frac{r}{2})}  \big(\frac{\mu_q}{\mu_q^2+|x-x_q|^2}\big)^{\frac{(n-\alpha)(p-1)}{2}}\cdot\frac{1}{|\xi-y|^{n-\alpha+1}}dy\\ \nonumber
&\le & C\mu_q^{-\frac{\alpha}{p-2}+\frac{(n-\alpha)(p-1)}{2}}\int_{\Omega\setminus B(x_0,\frac{r}{2})}  \frac{1}{|\xi-y|^{n-\alpha+1}}dy\\  \label{proof3-equ4}
&\le & C\mu_q^{-\frac{\alpha}{p-2}+\frac{(n-\alpha)(p-1)}{2}}\to 0,  
\end{eqnarray}
as $q\to (q_\alpha)^+$.
Thus combining with \eqref{proof3-equ2}, \eqref{proof3-equ3} and \eqref{proof3-equ4}, we conclude that $u_q(x_q)u_q(x)$ is equicontinuous in $\Omega \setminus B(x_0,r)$  as $q\to (q_\alpha)^+$. \hfill$\Box$

\medskip

\section{Blowup behaviour for $\alpha>n$}

\noindent
{\bf Proof of (\rmnum1) of Theorem \ref{BL-alpha-big}}.
 Notice that $f_q=u_q^{p-1}$ is the energy minimizing positive solution to $(\mathcal{P}_q)$,  and $\widehat{\xi}_{\alpha,q}(\Omega)\to \widehat{\xi}_{\alpha,q_\alpha}(\Omega)>0$ as $q\to (q_\alpha)^-$ \cite{DGZ2017}. Then
 \begin{equation}\label{R-Bas-equ1}
||f_q||_{L^q(\Omega)}=(\widehat{\xi}_{\alpha,q} (\Omega))^{\frac{1}{q-2}}\ge\frac{(\widehat{\xi}_{\alpha,q_\alpha}(\Omega))^{\frac{1}{q_\alpha-2}}}{2}>0
 \end{equation}
as $q$ close to $q_\alpha$.

We assume by contradiction that $\min\limits_{\overline \Omega} u_q(x)\ge C_1>0$ uniformly.

If $u_q(x) \le C_2<\infty$ uniformly, then it is easy to see that the $C^1$ norm of $u_q(x)$ is also uniformly bounded by \cite{DGZ2017}. Thus $u_q(x)$ is equicontinuous. Hence $u_q(x) \to u^*(x)>0$ pointwise as $q \to (q_\alpha)^-$. We also have $||f_q||_{L^q(\Omega)}\to ||f^*||_{L^{q_\alpha}(\Omega)}$ as $q\to (q_\alpha)^-$, where $f^*=(u^*)^{p_\alpha-1}$. So $f^*$ is an energy minimizing positive solution to $(\mathcal{P}_{q_\alpha})$, which is of $C^1$ obviously. Then we obtain a contradiction.

If $\max\limits_{\overline \Omega} u_q(x):=u_q(\tilde{x}_q) \to \infty$, then
$$\infty\leftarrow u_q(\tilde{x}_q)=\int_\Omega\frac{u_q^{p-1}(y)}{|\tilde{x}_q-y|^{n-\alpha}}dy\le C<\infty,$$
 as $q\to (q_\alpha)^-$, which again gives a contradiction. 

Thus we conclude that $\min\limits_{\overline \Omega} u_q(x):=u_q(x_q)\to 0$ and
$x_q\to x_0\in\overline\Omega$  as $q\to (q_\alpha)^-$, up to a subsequence.

To prove that $x_q$ will stay away from $\partial\Omega$ as $q\to (q_\alpha)^-$, we consider two cases. For simplicity, in the proof of this part below we write $u$ instead of $u_q$,  $f$ instead of $f_q$.

(\Rmnum1). The domain $\Omega$ is strictly convex.

By using the method of moving planes to integral equation \eqref{HB-sub-1}, which is omitted here since it is standard (see for example \cite{CFY2014}, \cite{DGZ2017}, \cite{DGZ17AM}, \cite{DZ2017}), we can prove that there exist $t_0>0, \alpha>0$ depending on the domain only, such that for every $x\in\partial\Omega$, $u(x-t\nu)$ is decreasing in $t\in [0, t_0]$, where $\nu\in \mathbb{R}^n, |\nu|=1$ satisfying $(\nu,\overrightarrow{n}(x))\ge\alpha$, and $\overrightarrow{n}(x)$ is the unit outer normal of $\Omega$ at the boundary point $x$.

Now as in \cite{Han}, we can prove that there are $\gamma,\delta>0$ only depending on the domain $\Omega$ such that for any $x\in \{z\in\overline{\Omega}: d(z,\partial\Omega)<\delta \}$, there exists a measurable set $\Gamma_x\subset\{z\in\overline{\Omega}: d(z,\partial\Omega)>\delta/2 \}$ satisfying $meas(\Gamma_x)\ge\gamma$ and $u(y)\le u(x)$ for any $y\in\Gamma_x$. As in Theorem \ref{BL-alpha-small}, $\Gamma_x$ again can be taken to be a piece of cone with vertex at $x$. Then for any $x\in \{z\in\overline{\Omega}: d(z,\partial\Omega)<\delta \}$, by \eqref{R-Bas-equ1}, 
\begin{eqnarray*}
u(x)&\ge & \frac{1}{meas(\Gamma_x)}\int_{\Gamma_x} u(y)dy\ge \frac{\gamma^{\frac{1}{q}}}{|\Omega|}(\int_{\Omega} u^p(y)dy)^{\frac{1}{p}}\\
&=&\frac{\gamma^{\frac{1}{q}}}{|\Omega|}(\int_{\Omega} f^q(y)dy)^{\frac{1}{p}}
\ge C>0
\end{eqnarray*}
uniformly, which implies that $x_q$ must stay out of the region $\{z\in\overline{\Omega}: d(z,\partial\Omega)<\delta \}$ as $q\to (q_\alpha)^-$.

(\Rmnum2). The domain $\Omega$ is not necessarily strictly convex.

As in the proof of Theorem \ref{BL-alpha-small}, we use the Kelvin transformation at each boundary point and then apply the method of moving planes. We give the details for the reader's convenience.

Without loss of generality we assume $P=(-1,0,\cdots,0)\in\partial\Omega$ and the ball $B(0,1)$ contacts $P$ from the exterior of $\Omega$. Let $w(x),x\in\Omega^*$ be the Kelvin transform of $u$, where $\Omega^*$ is the image of $\Omega$ under the Kelvin transform.  We also can assume that $x_1=-1$ is the tangent plane of $\partial\Omega^*$ at $P$. Thus $w(x)$ satisfies \eqref{UB-7-add}. Now it is enough to prove that $w(x)$ is decreasing along the $x_1$ direction in a neighbourhood of $P$.

Again as in the proof of Theorem \ref{BL-alpha-small}, assume that we can move the tangent plane at $P$ along the $x_1$ direction to the limiting place $x_1=\overline{x}$, denoted by $T_0$, such that the reflection of $\Omega^*\cap \{x_1<\lambda\}$ with respect to $T_\lambda$ is also a subset of $\Omega^*$, where $T_\lambda=\{x\in \mathbb{R}^n \ | \ x_1=\lambda\}, \lambda\in (-1,\overline{x}]$. 

We denote by $\Omega_1^\prime$ the reflection of $\Omega_1:=\Omega^*\cap \{x_1<\overline{x}\}$ with respect to $T_0$.  Now we apply the method of moving planes to integral equation \eqref{UB-7-add} on $\Omega^\prime:=\Omega_1\cup \Omega_1^\prime$.

For any real number $\lambda\in (-1,\overline{x})$, define 
$x^\lambda, \Sigma_\lambda,\widetilde{\Sigma}_\lambda, \Sigma^C_\lambda$ as in the proof of Theorem \ref{BL-alpha-small}. Set $w_\lambda(x):=w(x^\lambda).$
We complete the proof  in two steps. 

\noindent \textbf{Step 1.}  We show that for $\lambda$ larger than and sufficiently close to $-1$,
\begin{equation}\label{UB-6-add-large}
w(x)-w_\lambda(x)\geq 0,  \quad \forall x\in \Sigma_\lambda, 
\end{equation}
which can be obtained easily by  
\begin{eqnarray*}
\frac{\partial w(x)}{\partial x_1}|_{x_1=-1}
=(\alpha-n)\int_{\Omega^*}\frac{w^{p-1}(y)}{|x-y|^{n-\alpha+2}}(-1-y_1)|y|^{(n-\alpha)(p-p_\alpha)} dy<0
\end{eqnarray*}
since $w(x)\in C^1(\overline{\Omega^*})$.

\noindent \textbf{Step 2.} Plane $T_\lambda$ can be moved continuously towards right to its limiting position $T_0$ as long as inequality \eqref{UB-6-add-large} holds. Thus we  conclude that $w(x)$ is decreasing in the $x_1$ direction for any $x\in\Omega_1$.

Define
\[\lambda_0:=\sup\{\lambda\in (-1,\overline{x})\ | \ w(y)\geq w_\mu(y),\forall y \in\Sigma_\mu, \ \ -1<\mu\leq\lambda\}.\]
We claim that $\lambda_0=\overline{x}$.
For that we assume by contradiction that $\lambda_0<\overline{x}$.

 We first  show that
\[w(x)>w_{\lambda_0}(x),\quad\text{in} ~\Sigma_{\lambda_0}.\]
Hence we have
\[w(x)-w_{\lambda_0}(x)\ge c_1>0,\quad \text{in} ~\Sigma_{\lambda_0-\epsilon_1}
\]for $\epsilon_1>0$ small, which will be determined later.

In fact, as in \eqref{UB-10-add}, we have for $x\in \Sigma_{\lambda_0}$ that
\begin{eqnarray}\label{UB-3-add-large}
&& w(x)-w_{\lambda_0}(x)\\\nonumber
&\geq&\int_{\Sigma^C_{\lambda_0}\backslash\widetilde{\Sigma}_{\lambda_0}}\big[\frac1{|x-y|^{n-\alpha}}-\frac1{|x^{\lambda_0}-y|^{n-\alpha}}\big]{w^{p-1}(y)}|y|^{(n-\alpha)(p-p_\alpha)}dy.
\end{eqnarray}
If there exists some point $\xi\in\Sigma_{\lambda_0}$ such that $w(\xi)= w_{\lambda_0}(\xi)$, then  since $|x-y|>|x^{\lambda_0}-y|$ for $x\in{\Sigma}_{\lambda_0},y\in\Sigma^C_{{\lambda_0}}$, we deduce  from \eqref{UB-3-add-large} that
\[w(y)\equiv \infty,\quad \forall y\in\Sigma^C_{\lambda_0}\backslash\widetilde{\Sigma}_{\lambda_0}.\]
This contradicts to that $w\in C^1(\overline{\Omega^*})$ since $\Sigma^C_{\lambda_0}\backslash\widetilde{\Sigma}_{\lambda_0}$ is not empty.

For fixed small $\delta_1>0$, we choose $\epsilon_1$ small and $\varepsilon \in (0, \epsilon_1)$  such that for any  $\lambda\in [\lambda_0,\lambda_0+\varepsilon)$, there holds
\[w(x)\ge w_{\lambda}(x), \forall x\in \Sigma_{\lambda_0-\varepsilon_1},\]
and
\[
|\frac1{|x-y|^{n-\alpha}}-\frac1{|x^{\lambda}-y|^{n-\alpha}}|\le\delta_1, ~~~~~~~~~ \quad\text{for}~x\in\Sigma_\lambda\backslash\Sigma_{\lambda_0-\varepsilon_1}.
\]

Write
\begin{equation*}
\Sigma^w_\lambda:=\{x\in \Sigma_\lambda |w_\lambda(x)>w(x)\}.
\end{equation*}
Similar to \eqref{UB-4-add}, for any $x\in\Sigma^w_\lambda$,
\begin{eqnarray*}
0&>& w(x)-w_\lambda(x)\\
&\ge&\int_{\Sigma_\lambda}\big[\frac1{|x-y|^{n-\alpha}}-\frac1{|x^\lambda-y|^{n-\alpha}}\big]({w^{p-1}(y)}-{w^{p-1}_\lambda(y)})|y|^{(n-\alpha)(p-p_\alpha)}dy\\
&\ge&\int_{\Sigma_\lambda^w}\big[\frac1{|x-y|^{n-\alpha}}-\frac1{|x^\lambda-y|^{n-\alpha}}\big]({w^{p-1}(y)}-{w^{p-1}_\lambda(y)})|y|^{(n-\alpha)(p-p_\alpha)}dy\\
&\ge&-C(\Omega)\delta_1\int_{\Sigma_\lambda^w}({w^{p-1}(y)}-{w^{p-1}_\lambda(y)})dy,
\end{eqnarray*}
where $C(\Omega)>0$. Since $w\in C^1(\overline{\Omega^*})$,  there exists a positive constant $C_0$ such that $\frac 1{C_0}\le w\le C_0$.   It  follows from the above that
\begin{eqnarray*}
 \int_{\Sigma_\lambda^w}(w_\lambda(x)-w(x))dx
&\le&C(\Omega)\delta_1\int_{\Sigma_\lambda^w}\int_{\Sigma_\lambda^w}({w^{p-1}(y)}-{w^{p-1}_\lambda(y)})dydx\\
&\le&C(\Omega) (1-p)\delta_1\int_{\Sigma_\lambda^w}\int_{\Sigma_\lambda^w}w^{p-2}(y)(w_\lambda(y)-w(y))dydx\\
&\le&C\delta_1(\varepsilon+\varepsilon_1)^n\int_{\Sigma_\lambda^w}(w_\lambda(y)-w(y))dy,
\end{eqnarray*}
which implies that
\[\|w_{\lambda}-w\|_{L^1(\Sigma^w_\lambda)}\equiv 0,\]
 for $\delta_1, \varepsilon, \varepsilon_1>0$ small enough. Hence $\Sigma^w_\lambda$ must have measure zero.

We thus have
\[w(x)-w_{\lambda}(x)\geq 0, \quad~ \text{for~ any} ~x\in \Sigma_\lambda, ~\forall\lambda\in [\lambda_0,\lambda_0+\varepsilon)\]
since $w$ is continuous. This contradicts to the definition of $\lambda_0$.
Hence, $\lambda_0=\overline{x}$.

We hereby  complete the proof by using the same argument as Theorem \ref{BL-alpha-small}.  \hfill$\Box$

To continue, we need the following lemma from \cite{Li2004}.
\begin{lemma}\label{R-classification-Whole space}
Let $v$ be a nonnegative measurable solution to
 \begin{equation*}\label{R-HB-5}
v(x)=\int_{\mathbb{R}^n}\frac{v^{p_\alpha-1}(y)}{|x-y|^{n-\alpha}}dy,\quad x\in \mathbb{R}^n,
\end{equation*}
 then $v=c_1(\frac{1}{c_2+|x-y_0|^2})^\frac{n-\alpha}{2}$ for some $c_1, c_2>0,y_0\in\mathbb{R}^n$.
\end{lemma}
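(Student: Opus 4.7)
The plan is to prove this classification by the integral form of the method of moving spheres, adapted to the reversed regime $\alpha > n$. Throughout, note that $p_\alpha - 1 = (n+\alpha)/(n-\alpha) < 0$, so $v \mapsto v^{p_\alpha-1}$ is strictly decreasing; this flips most of the inequalities one would write down in the classical $1 < \alpha < n$ case, and is why the lemma cannot simply be quoted from Chen--Li--Ou.

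First I would establish the regularity $v \in C^\infty(\mathbb{R}^n)$ with $v>0$ everywhere. Since $\alpha > n$ the kernel $|x-y|^{\alpha-n}$ is continuous (indeed locally Lipschitz), so once the integral on the right-hand side is finite a standard bootstrap based on the integral representation yields smoothness of $v$. Positivity is automatic from the representation as soon as $v \not\equiv 0$: if $v$ vanished on a set of positive measure then $v^{p_\alpha-1} = +\infty$ there and the integral would diverge, contradicting the hypothesis that $v$ solves the equation.

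Next I would introduce, for each $x_0 \in \mathbb{R}^n$ and each $\lambda > 0$, the Kelvin transform
\[
v_{x_0,\lambda}(x) := \left(\frac{\lambda}{|x-x_0|}\right)^{n-\alpha} v\!\left(x_0 + \frac{\lambda^2(x-x_0)}{|x-x_0|^2}\right).
\]
A change of variables $z = x_0 + \lambda^2(y-x_0)/|y-x_0|^2$ in the integral equation, using the standard Kelvin identity $|\tilde x - \tilde y| = \lambda^2 |x-y|/(|x-x_0|\,|y-x_0|)$ together with the critical scaling relation $(n-\alpha)(p_\alpha-1) = n+\alpha$, shows that $v_{x_0,\lambda}$ satisfies the same integral equation. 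A direct comparison of $v$ with $v_{x_0,\lambda}$ on $\Sigma_\lambda := \{|x-x_0|>\lambda\}$ then yields an identity of the form
\[
v(x) - v_{x_0,\lambda}(x) = \int_{\Sigma_\lambda} K_\lambda(x,y)\bigl[v_{x_0,\lambda}^{\,p_\alpha-1}(y) - v^{\,p_\alpha-1}(y)\bigr]\,dy,
\]
with a positive kernel $K_\lambda$ on $\Sigma_\lambda\times\Sigma_\lambda$. Because $p_\alpha-1<0$, the bracket carries the same sign as $v(y)-v_{x_0,\lambda}(y)$, which is precisely the structural ingredient the moving-sphere method needs in this reversed setting.

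The moving-sphere procedure then runs as follows. For $\lambda>0$ sufficiently small the $C^1$ expansion of $v$ at $x_0$ gives $v \geq v_{x_0,\lambda}$ on $\Sigma_\lambda$, so the critical value $\bar\lambda(x_0) := \sup\{\lambda>0 : v \geq v_{x_0,\mu}\ \text{on}\ \Sigma_\mu\ \text{for all}\ \mu\in(0,\lambda]\}$ is positive. Either $\bar\lambda(x_0) = \infty$ for some $x_0$, in which case an asymptotic comparison at infinity (consistent with the polynomial growth $|x|^{\alpha-n}$ dictated by the expected profile) forces radial monotonicity about $x_0$ and the explicit form follows from the associated one-dimensional analysis; or else $\bar\lambda(x_0) < \infty$ at every $x_0$, and the integral maximum principle built into the displayed identity forces $v \equiv v_{x_0,\bar\lambda(x_0)}$ on $\mathbb{R}^n\setminus\{x_0\}$ for every $x_0$. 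In this latter case the standard calculus lemma of Li--Zhu (see \cite{Li2004}) concludes that $v(x) = c_1(c_2 + |x-y_0|^2)^{-(n-\alpha)/2}$ for some $c_1,c_2>0$ and $y_0\in\mathbb{R}^n$. The main obstacle is precisely the sign reversal forced by $p_\alpha-1<0$: every narrow-region estimate, $L^\infty$-comparison, and maximum-principle step that is standard in the classical $1<\alpha<n$ argument must be turned around, and the polynomial growth of $v$ at infinity replaces decay-based asymptotics with arguments carried out through the Kelvin inversion itself; this is the technical content of \cite{Li2004}, which I would follow closely.
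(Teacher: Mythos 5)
The paper offers no proof of this lemma at all: it is quoted directly from \cite{Li2004}, so the only ``approach'' to compare against is that citation, and your moving-spheres outline is indeed the argument of that reference (and of the analogous classification in \cite{DZ2015}). One small correction to your dichotomy: in the standard argument the case $\bar\lambda(x_0)=\infty$ does not lead to the explicit profile via ``radial monotonicity''; rather, if $v\ge v_{x_0,\lambda}$ on $\Sigma_\lambda$ for all $\lambda>0$ and all $x_0$, the other Li--Zhu calculus lemma forces $v$ to be constant, which is then excluded because for $\alpha>n$ the integral $\int_{\mathbb{R}^n}|x-y|^{\alpha-n}\,dy$ diverges, so the bubble form comes exclusively from the case where $\bar\lambda(x_0)$ is finite for every $x_0$.
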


\medskip

\noindent
{\bf Proof of (\rmnum2) of Theorem \ref{BL-alpha-big}}.
Notice that $u_q(x)=f_q^{q-1}(x)\in L^p(\Omega):=\{u| \int_{\Omega}|u|^p dx<\infty\}$,   $p_\alpha<p<0$. Denote $u_q(x_q):=\min\limits_{x\in\overline{\Omega}} u_q(x)\to 0$  as $q \to (q_\alpha)^-$. Let
 $$\mu_q:= u_q^{-\frac {p-2}{\alpha}}(x_q) \, \, \mbox{ and} \, \, \Omega_\mu:=\frac{\Omega-x_q}{\mu_q}=\{z \ | \ z=\frac {x-x_q}{\mu_q} \ \mbox{for} \, x \in \Omega\}.
$$
Define
\begin{equation*}\label{R-blowup-1}
v_q(z):=\mu_q^{\frac{\alpha}{p-2}} u_q(\mu_q z+x_q), \,  \ \, \, \mbox{for} \, \, z \in \overline\Omega_\mu.
\end{equation*}
Then $v_q$ satisfies
\begin{equation*}\label{R-sub-equ3}
v_q(z)=\int_{\Omega_\mu}\frac{v_q^{p-1}(y)}{|z-y|^{n-\alpha}}dy, \ \ \ \ z \in \overline\Omega_\mu,
\end{equation*}
and $v_q(0)=1$, $v_q(z) \ge 1.$

Firstly, we have
\begin{equation}\label{R-sub-equ4}
1=v_q(0)=\int_{\Omega_\mu}v_q^{p-1}(y)|y|^{\alpha-n}dy.
\end{equation}

Claim: We have
\begin{equation}\label{R-sub-equ5}
\liminf\limits_{q\to(q_\alpha)^-}\int_{\Omega_\mu}v_q^{p-1}(y)dy\ge c_0>0.
\end{equation}
Otherwise, if up to a subsequence $\int_{\Omega_\mu}v_q^{p-1}(y)dy\to 0$ as $q \to (q_\alpha)^-$, then for fixed $R_0>0$ we actually can prove that $v_q(z)\to 1$ uniformly for $z\in B(0,R_0)\cap \Omega_\mu$ as $q \to (q_\alpha)^-$, which then gives a contradiction.  In fact for any $\epsilon>0$, there exists $R>0$ large enough and $q$ close to $q_\alpha$, such that
\begin{eqnarray*}
1&\le &v_q(z)\\
&=&\int_{\Omega_\mu\setminus B(0,R)}\frac{v_q^{p-1}(y)}{|z-y|^{n-\alpha}}dy+\int_{B(0,R)}\frac{v_q^{p-1}(y)}{|z-y|^{n-\alpha}}dy\\
&\le& (1+\frac{R_0}{R})^{\alpha-n}\int_{\Omega_\mu\setminus B(0,R)}\frac{v_q^{p-1}(y)}{|y|^{n-\alpha}}dy+(R+R_0)^{\alpha-n}\int_{B(0,R)}v_q^{p-1}(y)dy\\
&\le& (1+\frac{R_0}{R})^{\alpha-n}+(R+R_0)^{\alpha-n}\int_{\Omega_\mu}v_q^{p-1}(y)dy\\
&\le& 1+\frac{\epsilon}{2}+\frac{\epsilon}{2}, \ \mbox{for} \  z\in B(0,R_0)\cap \Omega_\mu \ \mbox{uniformly}.
\end{eqnarray*}
That is, $v_q(z)\to 1, \ z\in B(0,R_0)\cap \Omega_\mu$ uniformly as $q \to (q_\alpha)^-$.

It is also easy to see that $\int_{\Omega_\mu}v_q^{p-1}(y)dy$ is uniformly bounded as $q \to (q_\alpha)^-$ by using \eqref{R-sub-equ4}, which then combining with \eqref{R-sub-equ5} gives that
\begin{equation}\label{R-sub-equ6}
0<C_1(1+|z|)^{\alpha-n}\le v_q(z) \le C_2(1+|z|)^{\alpha-n}, \ \mbox{uniformly for any} \ z.
\end{equation}

Now we claim $v_q(z)$ is equicontinuous on any bounded domain $\widehat{\Omega}\subset \Omega_\mu$ when $q \to (q_\alpha)^-$.  For $z\in\widehat{\Omega}$, we write
\[
v_q(z)=\int_{\Omega_\mu\setminus B(0,R)}\frac{v_q^{p-1}(y)}{|z-y|^{n-\alpha}}dy+\int_{\Omega_\mu\cap B(0,R)}\frac{v_q^{p-1}(y)}{|z-y|^{n-\alpha}}dy.
\]
For $\epsilon>0$ small enough, we have for $z\in\widehat{\Omega}$ that
\begin{eqnarray}\nonumber
&&\int_{\Omega_\mu\setminus B(0,R)}\frac{v_q^{p-1}(y)}{|z-y|^{n-\alpha}}dy\le C\int_{\Omega_\mu\setminus B(0,R)}\frac{v_q^{p-1}(y)}{|y|^{n-\alpha}}dy\\\label{R-small-term}
&\le&C\int_R^\infty r^{(\alpha-n)(p-1)+\alpha-1}dr=C R^{(\alpha-n)(p-1)+\alpha}<\epsilon
\end{eqnarray}
as $q$ close to $q_\alpha$, and $R>0$ large enough. On the other hand, it is easy to see that $\int_{B(0,R)\cap\Omega_\mu}\frac{v_q^{p-1}(y)}{|z-y|^{n-\alpha}}dy\in C^1(\widehat{\Omega})$. Hence
for $z_1,z_2\in \widehat{\Omega}$,
\begin{eqnarray}\nonumber
&&|\int_{B(0,R)\cap\Omega_\mu}\frac{v_q^{p-1}(y)}{|z_1-y|^{n-\alpha}}dy-\int_{B(0,R)\cap\Omega_\mu}\frac{v_q^{p-1}(y)}{|z_2-y|^{n-\alpha}}dy|\\\nonumber
&\le& C\int_{B(0,R)\cap\Omega_\mu}v_q^{p-1}(y)\frac{1}{|\xi-y|^{n-\alpha+1}}dy|z_1-z_2|, \\\label{R-main-term}
&\le& C R^{\alpha-1}|z_1-z_2|,
\end{eqnarray}
where $\xi=tz_1+(1-t)z_2$ for some $t\in(0,1)$. By \eqref{R-small-term} and \eqref{R-main-term} we conclude that $v_q(z)$ is equicontinuous on the bounded domain $\widehat{\Omega}\subset\Omega_\mu$ when $q \to (q_\alpha)^-$.

As $q\to (q_\alpha)^-$, since $x_q$ will stay away from $\partial\Omega$, we have $\Omega_\mu \to \Bbb{R}^n$,
 $v_q(z) \to v(z) \in C(\Bbb{R}^n)$, where $v(z)$ satisfies
$$
v(z)= \int_{\Bbb{R}^n} \frac {v^{p_\alpha-1}(y)}{|z-y|^{n-\alpha}} dy, \ \ \ \ v(0)=1.
$$
By Lemma \ref{R-classification-Whole space} and noticing that $\min\limits_{z\in \R^n} v(z)=v(0)$, we know
\begin{equation}\label{R-sub-equ6-add1}
v(z)=c_1(\frac{1}{c_2+|z|^2})^\frac{n-\alpha}{2}
\end{equation} for some $c_1,c_2>0$ satisfying
\begin{equation}\label{R-sub-equ7-add1}
c_1\cdot c_2^{\frac{\alpha-n}{2}}=1.
\end{equation}
By \cite{DGZ2017} we know $\widehat{\xi}_{\alpha,q_\alpha}(\Omega)=\widehat{\xi}_{\alpha,q_\alpha}(\R^n)$. Then we know
 $v^{p_\alpha-1}$ is an extremal function to $\widehat{\xi}_{\alpha,q_\alpha}(\R^n)$ since
\begin{eqnarray*}
\widehat{\xi}_{\alpha,q_\alpha}(\R^n)&\le&\frac{\int_{\Bbb{R}^n} v^{p_\alpha}(y)dy}{(\int_{\Bbb{R}^n} v^{p_\alpha}(y)dy)^{\frac{2}{q_\alpha}}}=\big(\int_{\Bbb{R}^n} v^{p_\alpha}(y)dy\big)^{-\frac{\alpha}{n}}\\
&=& \big(\lim_{q \to (q_\alpha)^-}\int_{\Omega} v_q^p(z)dz\big)^{-\frac{\alpha}{n}}\\
&=& \big(\lim_{q \to (q_\alpha)^-}\mu_q^{\frac{p\alpha}{p-2}-n}\int_{\Omega} u_q^p(x)dx\big)^{-\frac{\alpha}{n}}\\
&\le &\big(\lim_{q \to (q_\alpha)^-}\int_{\Omega} u_q^p(x)dx\big)^{-\frac{\alpha}{n}}\\
&=&\widehat{\xi}_{\alpha,q_\alpha}(\Omega)=\widehat{\xi}_{\alpha,q_\alpha}(\R^n),
\end{eqnarray*}
where we use \eqref{R-sub-equ6}, \eqref{R-Bas-equ1}, $p_\alpha<p<0$ and $\mu_q\to 0^+$ as $q \to (q_\alpha)^-$. Then
\begin{equation}\label{R-sub-equ7}
\int_{\Bbb{R}^n} v^{p_\alpha}(y)dy=(\widehat{\xi}_{\alpha,q_\alpha}(\Bbb{R}^n))^{-\frac{n}{\alpha}}.
\end{equation}

To continue, let $w_q$ be the Kelvin transform of $v_q$, that is,
$$w_q(\xi)=\frac{1}{|\xi|^{n-\alpha}}v_q(\frac{\xi}{|\xi|^2}), \ \ \ \ \xi\in\overline{\Omega_\mu^*},$$
which satisfies
\begin{equation*}
w_q(\xi)=\int_{\Omega_\mu^*}\frac{w_q^{p-1}(\eta)}{|\xi-\eta|^{n-\alpha}}|\eta|^{(n-\alpha)(p-1)-(n+\alpha)} d\eta,
\end{equation*}
where $\Omega_\mu^*$ is the image of $\Omega_\mu$ under the Kelvin transform.

For fixed $r>0$ small, which will be determined later, we have $w_q(\xi)\ge C(r)$ for $|\xi|\ge \frac{r}{2}$.

For a fixed $r_0>0$, we know that $v_q(x)\to v(x)=c_1(\frac{1}{c_2+|z|^2})^\frac{n-\alpha}{2}$ uniformly in $B_{1/r_0}(0)\setminus B_{1/(2r_0)}(0)$ as  $q\to (q_\alpha)^-$. Then for $\xi\in (B_{2r_0}(0)\setminus B_{r_0}(0))\cap \Omega_\mu^*$,
\begin{eqnarray*}
w_q(\xi)&\le&|\xi|^{\alpha-n}(v(\frac{\xi}{|\xi|^2})+o(1))\\
&\le&C r_0^{\alpha-n}(C+o(1))\\
&\le&C(r_0).
\end{eqnarray*}
Then for any $\xi\in B_r(0)\cap \Omega_\mu^*$ with $r>0$ small enough, 
\begin{equation*}
w_q(\xi)\ge C\int_{(B_{2r_0}(0)\setminus B_{r_0}(0))\cap \Omega_\mu^*}\frac{w_q^{p-1}(\eta)}{|\xi-\eta|^{n-\alpha}}d\eta \ge C(r_0)
\end{equation*}
uniformly as $q\to (q_\alpha)^-$. Thus we conclude.
\hfill$\Box$

\begin{remark}
As in \eqref{sub-equ6}, we also can prove 
\begin{equation}\label{R-sub-equ8}
\mu_q^s\to 1, \ \ \mbox{as}\ \ p\to (p_\alpha)^+,
\end{equation}
where $s=p-p_\alpha\to 0^+$. In fact, it is easy to see that $\mu_q^s\le 1$. By \eqref{R-sub-equ6} and \eqref{R-sub-equ7},
\begin{eqnarray*}
&&(\widehat{\xi}_{\alpha,q_\alpha}(\Omega))^{-\frac{n}{\alpha}}=\lim_{p\to (p_\alpha)^+}\int_{\Omega}u_q^p(x)dx\\
&=&\lim_{p\to (p_\alpha)^+}\mu_q^{n-\frac{p\alpha}{p-2}}\int_{\Omega_\mu}v_q^p(z)dz=\lim_{p\to (p_\alpha)^+}\mu_q^{\frac{s(n-\alpha)}{p-2}}\int_{\Omega_\mu}v_q^p(z)dz\\
&=&\lim_{p\to (p_\alpha)^+}\mu_q^{\frac{s(n-\alpha)}{p-2}}\int_{\R^n}v^{p_\alpha}(z)dz=(\widehat{\xi}_{\alpha,q_\alpha}(\Omega))^{-\frac{n}{\alpha}}\lim_{p\to (p_\alpha)^+}\mu_q^{\frac{s(n-\alpha)}{p-2}},
\end{eqnarray*}
where we use \eqref{R-Bas-equ1} in the first equality. Thus \eqref{R-sub-equ8} holds.
\end{remark}

\medskip

Now to prove (\rmnum3) of Theorem \ref{BL-alpha-big}, we need the following lemma.
\begin{lemma}\label{est-func-u-big} We have
$u_q^{p-1}(x)u_q(x_q)\to \sigma_{n,\alpha} \delta_{x_0}(x)$ as $p\to (p_\alpha)^+$.
\end{lemma}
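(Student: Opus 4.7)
The plan is to mirror Lemma \ref{est-func-u-small} from the $1<\alpha<n$ case: fix $\varphi\in C_0^\infty(\Omega)$ and show that $\int_\Omega u_q^{p-1}(x)u_q(x_q)\varphi(x)\,dx \to \sigma_{n,\alpha}\varphi(x_0)$ as $p\to (p_\alpha)^+$. I would split the integral into the near-blowup part on $B(x_0,r)$, which will supply the delta mass, and the tail on $\Omega\setminus B(x_0,r)$, which should be negligible.

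For the tail, I would apply part (\rmnum2) of Theorem \ref{BL-alpha-big}. Since $n-\alpha<0$ and $p-1<0$, raising that inequality to the power $p-1$ reverses it and gives
\[
u_q^{p-1}(x) \le C\,\mu_q^{(n-\alpha)(p-1)/2}\bigl(\mu_q^2 + |x-x_q|^2\bigr)^{(\alpha-n)(p-1)/2}.
\]
On $\Omega\setminus B(x_0,r)$ the second factor is bounded (as $|x-x_q|$ stays bounded below, say by $r/2$, for $q$ sufficiently close to $q_\alpha$), so multiplying by $u_q(x_q)=\mu_q^{-\alpha/(p-2)}$ I get $|u_q^{p-1}(x)u_q(x_q)|\le C(r)\mu_q^{\beta(p)}$, where the exponent $\beta(p)=(n-\alpha)(p-1)/2 - \alpha/(p-2)$ tends to $\alpha>0$ as $p\to(p_\alpha)^+$; hence this contribution vanishes uniformly on $\Omega\setminus B(x_0,r)$.

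For the ball, the rescaling $x=\mu_q z+x_q$ (together with $s:=p-p_\alpha$) recasts the integral as
\[
\int_{B(x_0,r)}\!\!u_q^{p-1}(x)u_q(x_q)\varphi(x)\,dx \;=\; (\mu_q^s)^{(n-\alpha)/(p-2)}\!\!\int_{E_q} v_q^{p-1}(z)\varphi(\mu_q z+x_q)\,dz,
\]
where $E_q=B((x_0-x_q)/\mu_q,\,r/\mu_q)$. The scalar prefactor tends to $1$ by \eqref{R-sub-equ8}; inside the integral $\varphi(\mu_q z+x_q)\to\varphi(x_0)$ uniformly on compacta and $v_q^{p-1}\to v^{p_\alpha-1}$ pointwise. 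The uniform lower bound \eqref{R-sub-equ6} supplies the domination $v_q^{p-1}(z)\le C(1+|z|)^{(\alpha-n)(p-1)}$, whose exponent approaches $-(n+\alpha)$ and therefore stays below $-(n+\alpha/2)$ once $p$ is close enough to $p_\alpha$, so this is an integrable majorant on $\R^n$. Dominated convergence then yields $\varphi(x_0)\int_{\R^n}v^{p_\alpha-1}\,dz$, and the explicit radial computation at the end of Lemma \ref{est-func-u-small}, using \eqref{R-sub-equ6-add1}, \eqref{R-sub-equ7-add1}, \eqref{R-sub-equ7}, evaluates this to $\sigma_{n,\alpha}\varphi(x_0)$. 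The main subtlety is producing a $p$-uniform integrable majorant for the rescaled integrand; this is why restricting to a sufficiently small right-neighbourhood of $p_\alpha$ (so that the exponent $(\alpha-n)(p-1)$ is safely below $-n$) is essential.
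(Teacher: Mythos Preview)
Your proposal is correct and follows essentially the same route as the paper's proof: split against $\varphi\in C_0^\infty(\Omega)$ into $B(x_0,r)$ and its complement, kill the tail via part (\rmnum2) of Theorem \ref{BL-alpha-big} (the exponent $\frac{(n-\alpha)(p-1)}{2}-\frac{\alpha}{p-2}\to\alpha>0$ is exactly the one the paper computes), and on the ball rescale and invoke \eqref{R-sub-equ8} together with the explicit evaluation of $\int_{\R^n}v^{p_\alpha-1}$ from Lemma \ref{est-func-u-small}. The only cosmetic difference is that the paper fixes $\varphi(x_0)$ first and absorbs $\varphi(x)-\varphi(x_0)$ into an $o_r(1)$ term, whereas you carry $\varphi(\mu_q z+x_q)$ through and justify dominated convergence directly via the uniform two-sided bound \eqref{R-sub-equ6}; both arguments are equivalent.
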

\begin{proof} Take any function $\varphi(x)\in C_0^\infty(\Omega)$.
For any $r>0$ small, by (\rmnum2) of Theorem \ref{BL-alpha-big},
\begin{eqnarray*}
&&|\int_{\Omega\setminus B(x_0,r)}u_q^{p-1}(x)u_q(x_q)\varphi(x)dx|\\
&\le& C\int_{\Omega\setminus B(x_0,r)}\mu_q^{\frac{(n-\alpha)(p-1)}{2}}\frac{u_q(x_q)}{|x-x_0|^{(n-\alpha)(p-1)}}dx\\
&\le& C\mu_q^{\frac{(n-\alpha)(p-1)}{2}-\frac{\alpha}{p-2}}\to 0
\end{eqnarray*}
as $p\to (p_\alpha)^+$. On the other hand, by using \eqref{R-sub-equ6-add1}, \eqref{R-sub-equ7-add1}, \eqref{R-sub-equ7} and \eqref{R-sub-equ8}, similar to Lemma \ref{est-func-u-small} we have
\begin{equation*}
\int_{B(x_0,r)}u_q^{p-1}(x)u_q(x_q)\varphi(x_0)dx\to \varphi(x_0)\int_{\mathbb{R}^n}v^{p_\alpha-1}(z) dz =\varphi(x_0)\sigma_{n,\alpha},
\end{equation*}
as $p\to (p_\alpha)^+$. 
Thus
\begin{eqnarray*}
\int_{B(x_0,r)}u_q^{p-1}(x)u_q(x_q)\varphi(x)dx\to (\varphi(x_0)+o_r(1))\sigma_{n,\alpha}
\end{eqnarray*}
as $p\to (p_\alpha)^+$, where $o_r(1)\to 0$ as $r\to 0$.
Hence 
\[\int_{\Omega}u_q^{p-1}(x)u_q(x_q)\varphi(x)dx \to \varphi(x_0)\sigma_{n,\alpha}\]
 as $p\to (p_\alpha)^+$.
\end{proof}

\noindent
{\bf Proof of (\rmnum3) of Theorem \ref{BL-alpha-big}}. From (\rmnum2) of Theorem \ref{BL-alpha-big} it is easy to see that there is one single point $x_0$ such that $x_q\to x_0$, up to a subsequence, and
\begin{equation*}
u_q(x_q)u_q(x)\ge C|x-x_0|^{\alpha-n},\ \ \ u_q(x)\to \infty \ \mbox{if} \ x\neq x_0,
\end{equation*}
as $q\to (q_\alpha)^-$.
For any $\varphi\in C_0^\infty(\Omega)$, it is also easy to check that $\int_\Omega\frac{\varphi(x)}{|x-y|^{n-\alpha}}dx$ is continuous in $y\in\overline\Omega$. Then by \eqref{HB-sub-1} and Lemma \ref{est-func-u-big}, for any $\varphi\in C_0^\infty(\Omega)$,
\begin{eqnarray*}
\int_\Omega u_q(x_q)u_q(x)\varphi(x)dx&=&\int_\Omega \varphi(x)dx\int_\Omega\frac{u_q^{p-1}(y)u_q(x_q)}{|x-y|^{n-\alpha}}dy\\
&=&\int_\Omega u_q^{p-1}(y)u_q(x_q)dy\int_\Omega\frac{\varphi(x)}{|x-y|^{n-\alpha}}dx\\
&\to &  \sigma_{n,\alpha} \int_\Omega\frac{\varphi(x)}{|x-x_0|^{n-\alpha}}dx,
\end{eqnarray*}
 as $q\to (q_\alpha)^-$. 
  So to prove that $u_q(x_q)u_q(x) \to   \frac{\sigma_{n,\alpha}}{|x-x_0|^{n-\alpha}}$ pointwise for $x\neq x_0$, it is enough to prove that $u_q(x_q)u_q(x)$ is uniformly bounded and equicontinuous in $\Omega \setminus B(x_0,r)$ for any $r>0$ small enough, as $q\to (q_\alpha)^-$.

In fact, by using \eqref{R-sub-equ6} and \eqref{R-sub-equ8},
\begin{eqnarray*}
&&u_q(x_q)u_q(x)=\mu_q^{-\frac{2\alpha}{p-2}} v_q(z) \le C \mu_q^{-\frac{2\alpha}{p-2}}(1+|z|)^{\alpha-n}  \ \ \ (z=\frac{x-x_q}{\mu_q})\\
&=&C \mu_q^{-\frac{2\alpha}{p-2}}(1+|\frac{x-x_q}{\mu_q}|)^{\alpha-n}\le C \mu_q^{-\frac{2\alpha}{p-2}+n-\alpha}\le C
\end{eqnarray*}
uniformly in $\Omega \setminus B(x_0,r)$ as $q\to (q_\alpha)^-$. On the other hand, similar to the proof of (\rmnum3) of Theorem \ref{BL-alpha-small}, by using Lemma \ref{est-func-u-big} and (\rmnum2) of Theorem \ref{BL-alpha-big},
we can prove that $u_q(x_q)u_q(x)$ is equicontinuous in $\Omega \setminus B(x_0,r)$  as $q\to (q_\alpha)^-$. \hfill$\Box$

\medskip

 \vskip 1cm
\noindent {\bf Acknowledgements}\\
\noindent The author would like to thank Professor Meijun Zhu for many helpful discussions and valuable comments during the preparation of this manuscript. The project is supported by  the
National Natural Science Foundation of China (Grant No. 11571268).






\end{document}